\documentclass[12pt,twoside]{amsart}

\date{}
\pagestyle{myheadings}
\textheight= 21.6 true cm \textwidth =14 true cm
\allowdisplaybreaks[4] \footskip=15pt
\renewcommand{\uppercasenonmath}[1]{}

\topmargin=27pt \evensidemargin0pt \oddsidemargin0pt
\numberwithin{equation}{section}
\newtheorem{thm}{Theorem}[section]
\newtheorem{prop}[thm]{Proposition}
\newtheorem{df}[thm]{Definition}
\newtheorem{lem}[thm]{Lemma}
\newtheorem{exa}[thm]{Example}
\newtheorem{que}[thm]{Question}
\newtheorem{cor}[thm]{Corollary}
\newtheorem{rem}[thm]{Remark}


\begin{document}
\title{ON SMALL DUAL RINGS}
\author{Liang Shen
}
\address{Department of Mathematics, Southeast University, Nanjing, 210096,
P.R.China \\\indent {\rm Email: lshen@seu.edu.cn} }

 \maketitle \baselineskip=20pt
\begin{abstract}
A ring $R$ is called right (small) dual if every (small) right  ideal of $R$ is a right annihilator. Left (small) dual rings can be defined similarly. And a ring $R$ is called (small) dual if $R$ is left and right (small) dual.  It is proved that $R$ is a dual ring if and only if $R$ is a semilocal and small dual ring.  Several known results are generalized and properties of small dual rings are explored. As applications, some characterizations of QF rings are obtained  through small dualities of rings.
\end{abstract}
{\bf Keywords:} Small dual rings; dual rings; semilocal rings, QF rings.\\

{\bf 2000 Mathematics Subject Classification:} 16D25, 16L60

\bigskip
\section{Introduction}
Throughout this paper rings are associative with
identity. For a subset $X$ of a ring $R$, the left annihilator of
$X$ in $R$ is ${\bf l}(X)=\{r\in R$: $rx=0$ for all $x\in X\}$.
Right annihilators are defined analogously. We write $J$, $Z_l$,
$Z_r$, $S_l$ and $S_r$ for the Jacobson radical, the left singular
ideal, the right singular ideal, the left socle and the right
socle of $R$, respectively.  Let $M$ be an $R$-module, $N\subseteq_{ess}M$ means that
$N$ is an essential submodule of $M$. We denote by $f=c\cdot$
that $f$ is a left multiplication map by the element $c\in R$.
Right multiplication map can be defined similarly. Let $R$ be a ring. We write
 M$_{R}$ for the category of right $R$-modules. And M$_{n}(R)$ denotes the ring of all $n\times n$ matrices over  $R$.\\
\indent Recall that a ring $R$ is called \emph{right dual} if every right ideal $I$ of $R$ is a right annihilator. Left dual rings can be defined similarly. And $R$ is called a \emph{dual ring} if it is left and right dual. Dual rings were firstly investigated by Baer \cite{B43}, Hall \cite{H39}, Kaplansky \cite{K48}, and discussed in detail by Hajarnavis, Norton \cite{HN85}. There are several generalizations of right dual rings. Recall that a ring  $R$ is called \emph{right Kasch} if every maximal right ideal of $R$ is a right annihilator. Left Kasch rings are defined analogously. And $R$ is called a \emph{Kasch} ring if it is both left and right Kasch.   Kasch rings are named in honor of  Friedrich Kasch. These rings were discussed in detail by Morita \cite{M66}, Nicholson, Yousif \cite{NY03} and so on.  A ring $R$ is called \emph{right} {\rm(}\emph{left}{\rm)} \emph{quasi-dual} if every essential right (left) ideal of $R$ is a right (left) annihilator. $R$ is called \emph{quasi-dual} if it is two-sided quasi-dual. These rings were studied by Page and Zhou \cite{PZ00}. \\
\indent In this article, we discuss the condition that every small right ideal of a ring $R$ is a right annihilator. And these rings are called \emph{right small dual} rings. Recall that a right ideal $I$ of $R$ is $small$ if for any proper right ideal $K$ of $R$, $I+K\neq R$. Left small dual rings can be defined similarly. A ring $R$ is called \emph{small dual} if it is both left and right small dual. By definition, a small right ideal is a ``superfluous'' part of $R$. We will give a short proof in Theorem 2.12 to  show that a right small dual ring $R$  is also a right dual ring if it is semilocal. Hence it is proved in Theorem 3.1 that a ring $R$ is a dual ring if and only if it is a semilocal and small dual ring if and only if it is a semiperfect and small dual ring. Some other properties of small dual rings are explored and several known results are improved. As applications, we obtain some characterizations of QF rings given by small dualities. Recall that a ring $R$ is called a \emph{quasi-Frobenius} {\rm(}\emph{QF}{\rm)} \emph{ring} if it is a right artinian and two-sided dual ring.

\section{Results}
\begin{df}\label{def 2.1}
 {\rm A ring $R$ is called right small dual if every small right ideal $I$ of $R$ is a right annihilator, that is, {\bf rl}($I$)=$I$. Left small dual rings can be defined similarly. $R$ is called small dual if $R$ is left and right small dual.}
\end{df}
\indent Recall that a ring $R$ is called  a \emph{semiprimitive ring} if the Jacobson radical $J$ of $R$ is  zero. Since each small one-sided ideal of $R$ is contained in $J$, it is clear that every semiprimitive ring is small dual.
\begin{exa} \label{Exa 2.2}
The ring of integers $\mathbb{Z}$ is a right small dual ring which is not right dual.
\end{exa}
\begin{proof}
Since $\mathbb{Z}$ is a semiprimitive ring,  $\mathbb{Z}$ is right small dual. It is easy to see that $\mathbb{Z}$ is not right dual.
\end{proof}
\begin{exa} \label{Exa 2.3}
\rm (Bj$\ddot{o}$rk Example) Let $F$ be a field and assume that $a$ $\mapsto
\bar{a}$ is an isomorphism $F$ $\rightarrow \bar{F}\subseteq F$,
where the subfield $\bar{F}\neq F$. Let $R$ denote the left vector
space on basis \{1,$t$\}, and make $R$ into an $F$-algebra by
defining $t^{2}$=0 and $ta$=$\bar{a}t$ for all $a\in F$. Then
\begin{enumerate}
\item $R$ is
left small dual  but not right small dual.
\item M$_{2}(R)$ is not left small dual.
\end{enumerate}
\end{exa}
\begin{proof}
(1). By \cite [Example 2.5, Example 7.3]{NY03}, $R$ is a \emph{left CF} ring (every left ideal is a left annihilator of finite elements in $R$) but not left mininjective. Recall that a ring $R$ is called \emph{left mininjective} if every homomorphism from a minimal left ideal of $R$ to $_{R}R$ can be extended to one from $_{R}R$ to $_{R}R$. Since $R$ is left CF, $R$ is left dual. Thus $R$ is left small dual.
But $R$ is not right small dual, if $R$ is right small dual, by following Proposition 2.6 (4), $R$ is left mininjective.  This is a contradiction.\\
\indent (2). By \cite[Example 2.5]{NY03}, $R$ is a left artinian ring which is not QF. Thus M$_{2}(R)$ is a semilocal ring. If M$_{2}(R)$ is left small dual, by following Theorem 2.12, M$_{2}(R)$ is a left dual ring. Thus M$_{2}(R)$ is right $P$-injective by \cite[Lemma 5.1]{NY03}. Recall that a ring $R$ is called \emph{right P-injective} (\emph{right n-injective}) if every homomorphism from a principal ($n$-generated) right ideal of $R$ to $R_{R}$ can be extended to one from $R_{R}$ to $R_{R}$.  According to \cite[Proposition 5.36]{NY03}, $R$ is right 2-injective. Since $R$ is left artinian and right 2-injective, $R$ is a right 2-injective ring with ACC on right annihilators.  Thus, by \cite[Corollary 3]{R75}, $R$ is a QF ring. This is a contradiction.
\end{proof}
\indent Recall that a right $R$-module $M$ is called $torsionless$ if it can be cogenerated by $R_{R}$. That is, $M$ can be embedded into a direct product of $R_{R}$.
\begin{lem}\label{Lem 2.4}
\cite[Lemma 1.40]{NY03}
If T is a right ideal of R, then ${\bf rl}(T)=T$ if and only if R/T is torsionless.
\end{lem}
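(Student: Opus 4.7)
The plan is to prove both directions by exploiting the standard identification between right $R$-module homomorphisms $R/T\to R_R$ and elements of the left annihilator $\mathbf{l}(T)$. Specifically, any such homomorphism $f$ is determined by $a:=f(1+T)$, and well-definedness of $f$ on $R/T$ is equivalent to $aT=0$, i.e. $a\in \mathbf{l}(T)$; conversely every $a\in\mathbf{l}(T)$ yields a homomorphism $f=a\cdot$ sending $r+T$ to $ar$. This reduces the lemma to an annihilator calculation.

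For the ``only if'' direction, I would assume $\mathbf{rl}(T)=T$ and show $R/T$ embeds into a product of copies of $R_R$ by verifying that the canonical map
\begin{equation*}
\varphi:R/T\longrightarrow \prod_{a\in\mathbf{l}(T)} R_R,\qquad r+T\mapsto (ar)_{a\in\mathbf{l}(T)},
\end{equation*}
is injective. Indeed, if $r+T\in\ker\varphi$ then $ar=0$ for every $a\in\mathbf{l}(T)$, so $r\in\mathbf{rl}(T)=T$, giving $r+T=0$.

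For the ``if'' direction, suppose $R/T$ is torsionless and take $x\in\mathbf{rl}(T)$. If $x\notin T$ then $x+T\neq 0$ in $R/T$, so since $R/T$ is cogenerated by $R_R$ there exists a homomorphism $f:R/T\to R_R$ with $f(x+T)\neq 0$. Setting $a=f(1+T)$, the relation $aT=f(T+T)=0$ gives $a\in\mathbf{l}(T)$, while $ax=f(x+T)\neq 0$ contradicts $x\in\mathbf{rl}(T)$. Hence $\mathbf{rl}(T)\subseteq T$, and combined with the trivial inclusion $T\subseteq\mathbf{rl}(T)$ we conclude $\mathbf{rl}(T)=T$.

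I do not expect any real obstacle here: the lemma is a textbook duality between annihilators and torsionless quotients, and the only point to be careful about is the bijection between $\operatorname{Hom}_R(R/T,R_R)$ and $\mathbf{l}(T)$, which makes both implications into one-line annihilator arguments.
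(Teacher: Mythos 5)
Your proof is correct and is exactly the standard argument for this fact; the paper itself gives no proof, simply citing \cite[Lemma 1.40]{NY03}, and your identification of $\operatorname{Hom}_R(R/T,R_R)$ with $\mathbf{l}(T)$ together with the kernel computation of the canonical map into $\prod_{a\in\mathbf{l}(T)}R_R$ is precisely the textbook proof being cited.
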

\begin{prop}\label{Prop 2.5 }
The following are equivalent for a ring R.
\begin{enumerate}
\item R is right small dual.
\item ${\bf r}({\bf l}(T)\cap Rb)=T+{\bf r}(b)$ for all small right ideals $T$ and  all $b\in R$.
\item R/T is torsionless for every small right ideal T of R.
\end{enumerate}
\end{prop}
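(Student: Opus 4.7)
The plan is to verify the equivalences in the order $(1)\Leftrightarrow(3)$ first, then close the loop via $(2)\Rightarrow(1)\Rightarrow(2)$.

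The equivalence $(1)\Leftrightarrow(3)$ is just Lemma 2.4 applied pointwise. By definition $R$ is right small dual precisely when ${\bf rl}(T)=T$ for every small right ideal $T$, and Lemma 2.4 translates this condition into $R/T$ being torsionless. The implication $(2)\Rightarrow(1)$ I would obtain by specializing (2) at $b=1$: then $Rb=R$ and ${\bf r}(b)=0$, so the displayed identity collapses to ${\bf rl}(T)=T$ for every small right ideal $T$, which is exactly the definition of right small dual.

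The substantive implication is $(1)\Rightarrow(2)$. The inclusion $T+{\bf r}(b)\subseteq {\bf r}({\bf l}(T)\cap Rb)$ is a direct check: any $c\in {\bf l}(T)\cap Rb$ has the form $c=sb$ and annihilates $T$, so it kills both summands. For the reverse inclusion, the key observation is that $bT$ is itself a small right ideal of $R$: it is contained in the small right ideal $T$, and any subideal of a small right ideal is small. Given $x\in {\bf r}({\bf l}(T)\cap Rb)$, I would show $bx\in {\bf rl}(bT)$ by noting that any $c\in {\bf l}(bT)$ satisfies $cb\in {\bf l}(T)\cap Rb$, hence $cbx=0$. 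Applying the right small dual hypothesis (1) to the small right ideal $bT$ then gives ${\bf rl}(bT)=bT$, so $bx=bt$ for some $t\in T$, whence $x-t\in {\bf r}(b)$ and $x\in T+{\bf r}(b)$.

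The main conceptual step, and what I would flag as the heart of the proof, is recognizing that $(1)\Rightarrow(2)$ reduces to applying right small duality to the auxiliary small right ideal $bT$ rather than to $T$ itself; once this reduction is spotted, the remaining annihilator manipulations are routine.
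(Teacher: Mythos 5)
Your argument follows the paper's proof step for step: $(1)\Leftrightarrow(3)$ via Lemma 2.4, $(2)\Rightarrow(1)$ by specializing to $b=1$, and $(1)\Rightarrow(2)$ by passing to the auxiliary ideal $bT$, showing ${\bf l}(bT)\subseteq {\bf l}(bx)$, and then invoking ${\bf rl}(bT)=bT$. All of the annihilator manipulations are correct and match the paper's.

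The one step that would fail as written is your justification that $bT$ is small. You assert that $bT$ is contained in $T$, but a small right ideal need not be a left ideal, so left multiplication by $b$ can carry $T$ outside itself. For example, in the ring $R$ of $3\times 3$ upper triangular matrices over a field $k$, the right ideal $T=e_{23}R=ke_{23}$ lies in $J$ and is therefore small, yet $e_{12}T=ke_{13}\not\subseteq T$. The conclusion you need is nevertheless true, for either of two standard reasons: (i) $bT$ is the image of the small submodule $T\subseteq R_R$ under the endomorphism $x\mapsto bx$ of $R_R$, and homomorphic images of small submodules are small; or (ii) since $R_R$ is finitely generated, $J$ is the largest small right ideal, so $T\subseteq J$, and because $J$ is two-sided this gives $bT\subseteq J$, whence $bT$ is small. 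With that repair your proof is complete and coincides with the one in the paper (the paper simply states that $bT$ is small without justification).
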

\begin{proof}
(1)$\Leftrightarrow$(3) is clear by Lemma 2.4. (1)$\Leftrightarrow$(2) can be obtained from \cite[Lemma 2.1]{NY98}. For the sake of
completeness, we provide the proof. (2)$\Rightarrow$(1) is obvious if we set $b=1$. For (1)$\Rightarrow$(2), it is clear that $T$+{\bf r}($b$)$\subseteq$ {\bf r}({\bf l}($T)\cap Rb$). Now we prove the converse. Assume $x\in {\bf r}({\bf l}(T)\cap Rb$) and $y\in {\bf l}(bT)$. It is easy to see $yb\in {\bf l}(T)\cap Rb$. Thus $ybx$=0. This informs that {\bf l}($bT$)$\subseteq${\bf l}($bx$). So $bx\in {\bf rl}(bx)\subseteq {\bf rl}(bT)$. Since $T$ is a small right ideal of $R$, $bT$ is also small. As $R$ is right small dual, {\bf rl}$(bT)=bT$. This implies that $bx\in bT$. So there exists
$t\in T$ such that $bx=bt$. Thus $x-t\in {\bf r}(b)$. Therefore, $x=t+x-t\in T+{\bf r}(b)$.
\end{proof}
\begin{prop}\label{Prop 2.6}
Let R be a right small dual ring. Then
\begin{enumerate}
\item {\bf l}$(J)\subseteq_{ess}$ $_{R}R$.
\item J $\subseteq Z_{l}$.
\item {\bf rl}$(T)=T$ for every finitely generated semisimple right ideal T of R.
\item Every homomorphism from a small principal left ideal to R can be extended to one from $_{R}R$ to $_{R}R$. In particularly,
R is left mininjective.
\end{enumerate}
\end{prop}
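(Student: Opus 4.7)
\bigskip

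The plan is to derive (1), (2), and (4) as direct consequences of right small duality (using ${\bf rl}(I)=I$ for small right ideals), and to reduce (3) to Proposition~2.5(2) via a structural decomposition of $T$.

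For (1), I would take any $0\neq a\in R$ and exhibit a nonzero element of $Ra\cap{\bf l}(J)$ by analysing the small right ideal $aJ$. Right small duality gives ${\bf rl}(aJ)=aJ$, while the inclusion ${\bf l}(a)\subseteq {\bf l}(aJ)$ is automatic. If one also had ${\bf l}(aJ)\subseteq{\bf l}(a)$, then $a\in{\bf rl}(a)={\bf rl}(aJ)=aJ$, so $a=aj$ for some $j\in J$; the invertibility of $1-j$ would then force $a=0$, a contradiction. Hence there exists $r\in R$ with $raJ=0$ and $ra\neq 0$, producing $0\neq ra\in Ra\cap{\bf l}(J)$. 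Item (2) is then immediate: for any $j\in J$ the containment ${\bf l}(J)\subseteq {\bf l}(j)$ shows ${\bf l}(j)$ contains the essential left ideal ${\bf l}(J)$ and is therefore itself essential, so $j\in Z_l$.

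For (4), the key observation is that a principal left ideal $Ra$ is small precisely when $a\in J$. In that case $aR\subseteq J$ is a small right ideal, and right small duality gives ${\bf rl}(a)={\bf rl}(aR)=aR$. Given a left $R$-homomorphism $f:Ra\to R$ with $c:=f(a)$, the condition ${\bf l}(a)\subseteq {\bf l}(c)$ is equivalent to $c\in{\bf rl}(a)=aR$, so $c=ad$ for some $d\in R$, and right multiplication by $d$ gives the desired extension. For the left mininjective conclusion, a minimal left ideal $L$ either lies in $J$, in which case $L$ is small cyclic and the main part of (4) applies, or $L\not\subseteq J$; Brauer's lemma then provides an idempotent $e$ with $L=Re$, and the standard idempotent argument ($f(e)=ef(e)\in eR$) extends $f$ to all of $R$.

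For (3), I would decompose $T=T_0\oplus M$, where $T_0=T\cap J$ is the sum of those simple summands of $T$ contained in $J$ and $M$ is a complement in the semisimple module $T$. Each simple summand $S$ of $M$ satisfies $S\cap J=0$, so by Brauer it is generated by an idempotent, and a standard orthogonalization induction combines these into a single idempotent $f\in R$ with $fR=M$. Since $T_0\subseteq J$ is finitely generated it is small, and ${\bf l}(T)={\bf l}(T_0)\cap{\bf l}(fR)={\bf l}(T_0)\cap R(1-f)$. Applying Proposition~2.5(2) with the small right ideal $T_0$ and $b=1-f$ then gives ${\bf rl}(T)={\bf r}({\bf l}(T_0)\cap R(1-f))=T_0+{\bf r}(1-f)=T_0+fR=T$, as required. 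The main obstacle is the orthogonalization step producing the idempotent $f$; while this is a standard refinement for direct sums of idempotent-generated simple right ideals, it is the one place in the proof that uses more than formal annihilator manipulation.
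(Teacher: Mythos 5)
Your proposal is correct, and for parts (1), (2) and (4) it matches what the paper does in substance: the paper simply cites \cite[Proposition 3.2]{ZLG10} for (1) and (2) (your explicit argument via the small right ideal $aJ$ and the invertibility of $1-j$ is the standard proof of that cited fact), and your proof of (4) is essentially identical to the paper's, including the Brauer--lemma dichotomy for the mininjectivity claim. Where you genuinely diverge is (3). The paper argues by induction on the Goldie dimension of $T$, peeling off one idempotent-generated simple summand $eR$ at a time and then \emph{re-running} the computation of Proposition 2.5 ``(1)$\Rightarrow$(2)'' with the induction hypotheses ${\bf rl}(T)=T$ and ${\bf rl}((1-e)T)=(1-e)T$ substituted for smallness (it cannot apply 2.5(2) verbatim, since the complementary piece need not be small). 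You instead split off the entire non-small part at once, writing $T=(T\cap J)\oplus fR$ with $f^{2}=f$, so that the remaining piece $T_{0}=T\cap J$ really is small and Proposition 2.5(2) applies literally with $b=1-f$; this makes the annihilator computation a one-liner. The price is the orthogonalization lemma that a finitely generated semisimple right ideal meeting $J$ trivially is generated by a single idempotent. You only sketch this, but it is correct: in the inductive step $M=e_{1}R\oplus(1-e_{1})M$ with $(1-e_{1})M\subseteq M$, so the condition $M\cap J=0$ is inherited and every simple submodule of $(1-e_{1})M$ is again non-nilpotent, hence idempotent-generated by Brauer; this is the same point where the paper's induction also quietly relies on Brauer's lemma to extract $eR$ as a direct summand. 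Net effect: your route localizes all the module-theoretic work in one standard lemma and then uses 2.5(2) as a black box, whereas the paper's route avoids the orthogonalization lemma at the cost of repeating (a variant of) the proof of 2.5 inside the induction. Both are valid.
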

\begin{proof}
(1) and (2) are obtained by \cite[Proposition 3.2]{ZLG10}.\\
\indent We will prove (3) by induction on G.dim ($T$) which is the Goldie (or called uniform) dimension of $T$. If G.dim ($T$)=1, then $T$ is a minimal right ideal of $R$. By \cite[Lemma 10.22]{L91}, $T$ is either nilpotent or a direct summand of $R_{R}$. If $T$ is nilpotent then it is contained in $J$, which is the biggest small right (or left) ideal of $R$. This informs that $T$ is  small. Thus $T$ is a right annihilator. If $T$ is a direct summand of $R_{R}$, it is clear that $T$ is a right annihilator. Now we assume that {\bf rl}{\rm($T$)}=$T$ for every semisimple right ideal $T$ of $R$ with G.dim ($T)\leq n$, where $n\geq 1$. We are going to show that {\bf rl}{\rm($K$)}=$K$ for every  semisimple right ideal $K$ of $R$ with G.dim ($K$)=$n+1$. If $K$ is small then $K$ is already a right annihilator. If $K$ is not small, it is easy to see that  $K=T\oplus eR$ where G.dim ($T$)=$n$, $e^{2}=e\in R$, and $eR$ is a minimal right ideal of $R$. Thus {\bf rl}($K$)={\bf rl}($T\oplus eR$)={\bf r}({\bf l}($T)\cap {\bf l}(e))={\bf r}({\bf l}(T)\cap R(1-e))$. Since $T$ and $(1-e)T$ are semisimple right ideals of $R$ with G.dim ($(1-e)T$)$\leq$ G.dim ($T)=n$. By assumption, {\bf rl}($T$)=$T$ and {\bf rl}($(1-e)T$)=$(1-e)T$.  Applying a similar proof in Proposition 2.5 ``(1)$\Rightarrow$(2)", we have ${\bf r}({\bf l}(T)\cap R(1-e))=T+
{\bf r}(1-e)=T+eR=K$.\\
\indent (4). If $Rt$ is a small principal left ideal of $R$, then $tR$ is a small right ideal of $R$. By assumption, {\bf rl}$(tR)=tR$. Let $f$ be an $R$-linear map from $Rt$ to $_{R}R$. It is clear that ${\bf l}(t)\subseteq {\bf l}(f(t))$. Thus $f(t)\subseteq {\bf rl}(f(t))\subseteq {\bf rl}(t)=tR$. Hence $f=\cdot c$ is a
right multiplication map by some element $c\in R$.  Since a minimal left ideal is either small or a direct summand of $_{R}R$, it is clear that a right small dual ring is left mininjective.
\end{proof}
\begin{que}\label{que 2.7}
If $R$ is a right small dual ring, is ${\bf rl}(S_{r})=S_{r}$?\\
By $(3)$ in the above proposition, the answer is ``Yes" if $S_{r}$ is finitely generated as a right $R$-module.
\end{que}

\begin{thm}\label{thm 2.8}
Let R be a right small dual ring and e an idempotent of R such that ReR=R.  Then eRe is also a right small dual ring.
\end{thm}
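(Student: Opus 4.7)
The plan is to verify ${\bf rl}(T)=T$ in $eRe$ for every small right ideal $T$ of $eRe$ by lifting $T$ to the small right ideal $TR$ of $R$, then invoking right small duality of $R$ in contrapositive form.

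First I would record two preliminary facts. A small right ideal of any ring is contained in its Jacobson radical (because $1-j$ is a unit for $j\in J$); combined with the standard identity $J(eRe)=eJ(R)e$ this gives $T\subseteq eJ(R)e$, hence $TR\subseteq J(R)R\subseteq J(R)$, so $TR$ is small in $R$. I would also verify the intersection identity $TR\cap eRe=T$: for $y\in TR\cap eRe$, writing $y=eye=\sum t_i(er_i e)$ using $et_i=t_i$ shows each summand lies in $T\cdot eRe\subseteq T$ because $T$ is a right $eRe$-ideal.

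For the main step I would argue by contrapositive. Take $x\in eRe$ with $x\notin T$. The intersection identity forces $x\notin TR$, so right small duality of $R$ applied to the small ideal $TR$ (together with ${\bf l}(TR)={\bf l}(T)$, which holds because $R$ has an identity) yields some $z\in{\bf l}(T)$ with $zx\neq 0$. The hypothesis $ReR=R$ now enters through ${\bf r}(eR)=0$ (from $eRr=0$ one gets $Rr=ReRr=0$, forcing $r=0$), producing $s\in eR$ with $szx\neq 0$. Set $c:=sze$. Since $s=es$, we have $c\in eRe$; using $eT=T$ (as $T\subseteq eRe$) and $zT=0$ gives $cT=sz(eT)=szT=0$, so $c$ lies in the left annihilator of $T$ in $eRe$; and using $ex=x$, $cx=sz(ex)=szx\neq 0$. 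This contradicts $x\in{\bf rl}(T)$ computed in $eRe$, so the contrapositive is complete.

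The principal obstacle, and the heart of the argument, is engineering the corner witness $c=sze$. The hypothesis $ReR=R$ is used critically through ${\bf r}(eR)=0$: without it one might be unable to find $s\in eR$ with $szx\neq 0$, leaving the $R$-level witness $z$ with no way to be promoted into $eRe$ while simultaneously preserving both the annihilation of $T$ and the non-annihilation of $x$.
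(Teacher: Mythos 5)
Your argument is correct, and it is a genuinely different route from the paper's. The paper proves this via Morita theory: since $ReR=R$, the functors $-\otimes_R Re$ and $-\otimes_S eR$ give an equivalence between {\rm M}$_R$ and {\rm M}$_S$ ($S=eRe$), and the proof reduces, via Proposition 2.5 and the Anderson--Fuller facts that equivalences preserve smallness and cogeneration, to showing that $eR/T$ is cogenerated by the progenerator $eR$, which follows because $eR/T\hookrightarrow R/T$ and $R/T$ is torsionless. Your proof instead works entirely at the level of elements and annihilators: the lift $TR\subseteq J(R)$ (valid since $T\subseteq J(eRe)=eJ(R)e$), the intersection identity $TR\cap eRe=T$, and the corner witness $c=sze$ all check out, and the contrapositive correctly produces an element of ${\bf l}_{eRe}(T)$ not killing a given $x\in eRe\setminus T$. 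Each step is verifiable by hand, so your version is more elementary and self-contained; it also reveals that the full strength of $ReR=R$ is not needed --- you only use ${\bf r}(eR)={\bf r}(ReR)=0$, so your argument proves a slightly more general statement. What the paper's categorical approach buys in exchange is reusability: essentially the same Morita machinery is deployed again in Theorem 2.10 to characterize when {\rm M}$_n(R)$ is right small dual, whereas your element-wise computation is specific to the corner ring situation.
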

\begin{proof}
Let $S=eRe$. Since $ReR=R$, it is clear that $R$ and $S$ are equivalent rings with equivalence functors $F=(-\otimes_{R} Re): {\rm M}_{R}\rightarrow {\rm M}_{S}$ and $G=(-\otimes_{S} eR): {\rm M}_{S}\rightarrow {\rm M}_{R}$. According to Proposition 2.5, we will show that $S/K$ is torsionless for every small right ideal $K$ of $S$. It is clear that  $G(S/K)\cong G(S)/G(K)$ and $G(K)$ is small in $G(S)\cong eR$. Thus, by
 \cite[Proposition 21.6 (3)]{AF92}, $S/K$ is cogenerated by $S$ if and only if $eR/T$ is cogenerated by $eR$, where $T$ is a small submodule of $eR$.
Now we only need to prove that $eR/T$ can be cogenerated by $eR$. It is clear that $T$ is also a small right ideal of $R$. As $R$ is right small dual, by Proposition 2.5, $eR/T\hookrightarrow R/T$ which is a torisonless right $R$-module. This informs that $eR/T$ can be cogenerated by $R_{R}$. Since $ReR=R$, $eR$ is a progenerator of M$_{R}$. It is easy to see that $R$ can be cogenerated by $eR$. Hence $eR/T$ can also be  cogenerated by $eR$. Thus  $S$ is a right small dual ring.
\end{proof}
\indent The following example shows that if $R$ is small dual then $eRe$ may not be  small dual, where $e^{2}=e\in R$.
\begin{exa}\label{exa 2.9}
Let $R$ be the algebra of matrices over a field $K$ of the form
\begin{center}
{$R=\left[
\begin{array}{ccccccc}
a & x & 0 & 0& 0 & 0 \\
0 & b & 0 & 0& 0 & 0 \\
0 & 0 & c & y& 0 & 0 \\
0 & 0 & 0 & a& 0 & 0 \\
0 & 0 & 0 & 0& b & z \\
0 & 0 & 0 & 0& 0 & c \\
\end{array}
\right]$, $a,b,c,x,y,z\in K$.}
\end{center}
 Set $e=e_{11}+e_{22}+e_{44}+e_{55}$, which is a sum of canonical matrix
units. Then $e$ is an idempotent of $R$ such that
$ReR\neq R$.  $R$ is a  small dual ring. But $eRe$ is not
small dual.
\end{exa}
\begin{proof}
\cite[Example 9]{K95} informs that $R$ is a QF ring and $eRe$
is not a dual ring. Since $R$ is QF, $R$ is small dual and $eRe$ is a semilocal ring. If $eRe$ is small dual, by following Theorem 2.12, $eRe$ is a dual ring. This contradiction implies that $eRe$ is not small dual.
\end{proof}
\indent For an $R$-module $N$,   we write $N^{m\times n}$ for the set of all formal $m\times n$ matrices whose entries are elements of $N$.  To be convenient, write $N^{n}=N^{1\times n}$ and $N_{n}=N^{n\times 1}$. Let $M_{R}$, $_{R}N$ be $R$-modules.
If $X\subseteq M^{l\times m}$, $S\subseteq R^{m\times n}$ and $Y\subseteq N^{n\times k}$. Define
$${\bf r}_{R^{m\times n}}(X)=\{s\in R^{m\times n}: xs=O_{l\times n}, \forall x\in X\},$$
$${\bf l}_{R^{m\times n}}(Y)=\{s\in R^{m\times n}: sy=O_{m\times k}, \forall y\in Y\}.$$
 Example 2.3 shows that a  matrix ring over a left small dual ring may
not be left small. Next we have

\begin{thm}\label{thm 2.10}
The following are equivalent for a ring R.
\begin{enumerate}
\item {\rm$M$}$_{n}(R)$ is a right small dual ring.
\item $\forall K_{R}\leq J_{n}$, every n-generated right $R$-module $R_{n}/K$ is torsionless.
\item $\forall K_{R}\leq J_{n}$ and $b\in R_{n}$, {\bf l}$_{R^{n}}(K)\subseteq {\bf l}_{R^{n}}(b)$ implies $b\in K$.
\item $\forall K_{R}\leq J_{n}$, {\bf r}$_{R_{n}}{\bf l}_{R^{n}}(K)=K$.
\end{enumerate}
\end{thm}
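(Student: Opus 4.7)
The plan is to translate all four conditions into statements about right $R$-submodules of the column module $R_n$, via the standard Morita-type correspondence. To a right ideal $I$ of $M_n(R)$ I attach $K := I e_{11}$, viewed inside $M_n(R) e_{11} \cong R_n$; conversely, to $K \leq R_n$ I attach the right ideal consisting of all matrices whose every column lies in $K$. These two constructions are mutually inverse, and since $J(M_n(R)) = M_n(J)$, the small right ideals of $M_n(R)$ correspond exactly to right $R$-submodules $K \leq J_n$.

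For (1) $\Leftrightarrow$ (4), I carry the annihilators through this dictionary. A matrix $A$ kills $I$ on the left if and only if $Av = 0$ for every $v \in K$, i.e.\ every row of $A$ lies in ${\bf l}_{R^n}(K)$. Iterating once more shows that ${\bf r}_{M_n(R)} {\bf l}_{M_n(R)}(I)$ is the right ideal corresponding to ${\bf r}_{R_n} {\bf l}_{R^n}(K)$, so the equality ${\bf r}_{M_n(R)} {\bf l}_{M_n(R)}(I) = I$ becomes ${\bf r}_{R_n} {\bf l}_{R^n}(K) = K$. The equivalence (3) $\Leftrightarrow$ (4) is then immediate on unfolding: $b \in {\bf r}_{R_n} {\bf l}_{R^n}(K)$ is literally the assertion ${\bf l}_{R^n}(K) \subseteq {\bf l}_{R^n}(b)$.

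For (2) $\Leftrightarrow$ (3), I use the identification $\mathrm{Hom}_R(R_n, R) \cong R^n$ sending $u \in R^n$ to the map $v \mapsto uv$. Under this, the homomorphisms $R_n \to R$ that factor through $R_n/K$ correspond exactly to the row vectors $u \in {\bf l}_{R^n}(K)$. Hence $R_n/K$ is torsionless if and only if for every $b \in R_n \setminus K$ there exists $u \in {\bf l}_{R^n}(K)$ with $ub \neq 0$, which is precisely the contrapositive of (3); since every $n$-generated right $R$-module has the form $R_n/K$, this yields (2). The main subtlety will be verifying the annihilator translation carefully and confirming that the correspondence sends small right ideals of $M_n(R)$ to exactly the submodules of $J_n$; once this dictionary is in place, each of the four equivalences reduces to a routine unpacking of definitions.
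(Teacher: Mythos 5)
Your argument is correct, and it reaches the four equivalences by a different route than the paper. The paper pivots on condition (2): it invokes the abstract Morita equivalence between $\mathrm{M}_{M_n(R)}$ and $\mathrm{M}_R$ (citing the preservation of cogeneration and of smallness from Anderson--Fuller) together with Proposition 2.5 / Lemma 2.4 to get (1)$\Leftrightarrow$(2), and only then computes (2)$\Leftrightarrow$(3)$\Leftrightarrow$(4) with row vectors. You instead pivot on condition (4): you make the Morita correspondence completely explicit as the lattice isomorphism $I\mapsto Ie_{11}$, $K\mapsto\{A:\text{all columns of }A\text{ lie in }K\}$, check by hand that it matches small right ideals of $M_n(R)$ with submodules of $J_n$ and that it intertwines the two annihilator operators, so that (1)$\Leftrightarrow$(4) is a direct translation; (3)$\Leftrightarrow$(4) and (2)$\Leftrightarrow$(3) then follow by unfolding definitions, the last one via $\mathrm{Hom}_R(R_n,R)\cong R^n$ exactly as in the paper's (2)$\Leftrightarrow$(3). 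What your version buys is self-containedness: it needs neither the torsionless criterion of Lemma 2.4 nor the category-theoretic transfer results, at the cost of the explicit bookkeeping that the columns of matrices in $I$ generate $Ie_{11}$ and that the two constructions are mutually inverse (which you assert and which does hold, since for $v\in K$ the matrix with $v$ in one column and zeros elsewhere lies in the corresponding ideal). One cosmetic remark: your closing aside that every $n$-generated module has the form $R_n/K$ is harmless but not needed, since condition (2) already quantifies only over $K\leq J_n$.
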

\begin{proof}
(1)$\Leftrightarrow$(2). Set $S=M_{n}(R)$ and $P=R_{n}$. Then $P$ is a left $S$-right $R$-bimodule. By \cite[Theorem 22.2, Corollary 22.4]{AF92},  $R$ and $S$ are equivalent rings with equivalence functors $F=Hom_{R}(_{S}P_{R},-): {\rm M}_{R}\rightarrow {\rm M}_{S}$ and $G=(-\otimes _{S}P): {\rm M}_{S}\rightarrow {\rm M}_{R}$. By Proposition 2.5, $S$ is right small dual if and only if  $S/K$ is cogenerated by $S_{S}$ for every small right ideal $K$ of $S$. Since $G(S/K)\cong G(S)/G(K)$, by
 \cite[Proposition 21.6 (3)]{AF92}, $S/K$ is cogenerated by $S$ if and only if $G(S)/G(K)$ is cogenerated by $G(S)\cong P$. According to \cite[Proposition 21.6 (5)]{AF92}, $K$ is a small right ideal of $S$ if and only if  $G(K)$ is a small submodule of   $G(S)$. Now replace $G(S)$ by $P$. Since $J_{n}$ is the radical of $R_{n}$, it is clear that (1)$\Leftrightarrow$(2).\\
 \indent (2)$\Rightarrow$(3). Let $K_{R}\leq J_{n}$ and {\bf l}$_{R^{n}}(K)\subseteq {\bf l}_{R^{n}}(b)$ for some $b\in R_{n}$. If $b\notin K$ then $0\neq \overline{b}=b+K\in R_{n}/K$. Since $R_{n}/K$ is torsionless,  there exists an $R$-linear map $f: R_{n}/K\rightarrow R_{R}$ such that $f(\overline{b})\neq 0$. Now set $g: R_{n}\rightarrow R$ with $g(x)=f(\overline{x}), \forall~ x\in R_{n}$. Then $g$ is a right $R$-linear map from $R_{n}$ to $R_{R}$ with $g(K)=0$ and $g(b)\neq 0$. It is not difficult to see that  there exists $c\in R^{n}$ such that $g(x)=cx, \forall ~x\in R_{n}$. Thus $cK=g(K)=0$ implies that $c\in {\bf l}_{R^{n}}(K)\subseteq {\bf l}_{R^{n}}(b)$. So $g(b)=cb=0$. This is a contradiction.\\
  \indent (3)$\Rightarrow$(2). Let $K_{R}\leq J_{n}$. By \cite[Proposition 8.10 (2)]{AF92}, it is equivalent to prove that for each $0\neq\overline{b}\in R_{n}/K$, there exists a right $R$-homomorphism from $R_{n}/K$ to $R_{R}$ such that $g(\overline{b})\neq 0$. If $0\neq\overline{b}$ then $b\not\in K$.  (3) informs that {\bf l}$_{R^{n}}(K) \not\subseteq{\bf l}_{R^{n}}(b)$. Thus there exists $c\in R_{n}$ such that $cK=0$ and $cb\neq 0$. Now define a map $g: R_{n}/K\rightarrow R_{R}$ with $g(\overline{x})=cx$. It is clear that $g$ is a well-defined right $R$-linear map from $R_{n}/K$ to $R_{R}$ with $g(\overline{b})=cb\neq 0$.\\
  \indent (3)$\Rightarrow$(4). Let $K_{R}\leq J_{n}$. It is obvious that $K\subseteq {\bf r}_{R_{n}}{\bf l}_{R^{n}}(K)$. If $b\in {\bf r}_{R_{n}}{\bf l}_{R^{n}}(K)$ then
  {\bf l}$_{R^{n}}(K)\subseteq {\bf l}_{R^{n}}(b)$. Applying (3), $b\in K$.\\
  \indent (4)$\Rightarrow$(3). Let $K_{R}\leq J_{n}$. If {\bf l}$_{R^{n}}(K)\subseteq {\bf l}_{R^{n}}(b)$ then $b\in {\bf r}_{R_{n}}{\bf l}_{R^{n}}(b)\subseteq {\bf r}_{R_{n}}{\bf l}_{R^{n}}(K)=K$.

\end{proof}
\indent If $R$ is a local ring, then every proper right ideal of $R$ is small. It is clear that a local and right small dual ring is right dual. In Theorem 2.12, we will show that a semilocal and right small dual ring is  right dual.
Recall that a right ideal $I$ of a ring $R$ has a $weak$ $supplement$ in $R$ if there exists a right ideal $K$ of $R$
such that $I+K=R$ and $I\cap K$ is a small right ideal of $R$.
\begin{lem}\label{lem 2.11}
The following are equivalent for a ring R.
\begin{enumerate}
\item R is right semilocal.
\item Every right ideal I of R has a weak supplement in R.
\item Every left ideal I of R has a weak supplement in R.
\end{enumerate}
\end{lem}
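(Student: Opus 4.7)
The plan is to prove (1)$\Leftrightarrow$(2) directly; the equivalence (1)$\Leftrightarrow$(3) will then follow by the same argument on the opposite side (or simply from the fact that being semilocal is a left--right symmetric property, since $R/J$ is semisimple as a left module iff it is semisimple as a right module).

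The two ingredients I want to exploit are very standard: first, the Jacobson radical $J$ is itself small in $R_R$ (if $J+K=R$ with $K$ proper, embed $K$ in a maximal right ideal $M\supseteq J$ to get the contradiction $R\subseteq M$); second, every small right ideal $L$ of $R$ is contained in $J$ (otherwise $L\not\subseteq M$ for some maximal right ideal $M$, forcing $L+M=R$).

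For (1)$\Rightarrow$(2): given a right ideal $I$, pass to $R/J$, which is semisimple by assumption. The image $(I+J)/J$ has a module-theoretic complement $K/J$ (with $K\supseteq J$), so $I+K+J=R$ and $(I+J)\cap K\subseteq J$. Using that $J$ is small, the first equation yields $I+K=R$; and $I\cap K\subseteq J$, so $I\cap K$ is small in $R$. Hence $K$ is a weak supplement of $I$.

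For (2)$\Rightarrow$(1): I need to show every right ideal $\bar I$ of $R/J$ is a direct summand. Write $\bar I=I/J$ with $J\subseteq I$, and let $K$ be a weak supplement of $I$, so $I+K=R$ and $I\cap K$ is small. Consider $\bar K=(K+J)/J$. Clearly $\bar I+\bar K=R/J$. For the intersection, if $x\in I\cap(K+J)$, write $x=k+j$ with $k\in K$, $j\in J\subseteq I$; then $k=x-j\in I\cap K$, which is small and hence contained in $J$ by the observation above, so $x\in J$. Thus $\bar I\cap\bar K=0$, showing $R/J$ is semisimple. The main (mild) obstacle is just the bookkeeping in this last step — keeping track of why $I\cap K\subseteq J$ requires invoking the ``every small right ideal lies in $J$'' fact rather than the more naive ``$J$ is the largest small right ideal'' slogan, which is why I flagged both auxiliary facts up front.
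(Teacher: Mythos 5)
Your argument is correct. Both directions of (1)$\Leftrightarrow$(2) go through: in (1)$\Rightarrow$(2) the complement $K/J$ of $(I+J)/J$ in the semisimple module $R/J$ does give $I+K=R$ (in fact since $J\subseteq K$ you do not even need smallness of $J$ here, as $I+K+J=I+K$) and $I\cap K\subseteq (I+J)\cap K\subseteq J$ is small; in (2)$\Rightarrow$(1) your bookkeeping correctly reduces $\bar I\cap\bar K=\bar 0$ to the fact that the small ideal $I\cap K$ lies in $J$, so every submodule of $R/J$ is a direct summand and $R/J$ is semisimple. The passage to (3) via the left--right symmetry of ``$R/J$ is semisimple'' is also fine.

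Where you differ from the paper: the paper does not prove this lemma at all --- it simply cites Corollary 3.2 of Lomp's paper on semilocal modules, where the result is obtained as a special case of a more general theory of weakly supplemented modules (a module is semilocal, i.e.\ has semisimple radical quotient with small radical-like behaviour, iff it is weakly supplemented, and this is then specialized to $R_R$ and $_RR$). Your proof is a direct, self-contained, elementary argument at the level of the ring itself, using only the two standard facts that $J$ is small and that small one-sided ideals sit inside $J$. What the citation buys the paper is brevity and access to the general module-theoretic statement; what your argument buys is a two-paragraph proof that a reader can verify without leaving the page, and it makes transparent exactly which properties of $J$ are used. Both are legitimate; yours is arguably the more informative inclusion for a paper whose main theorems (2.12, 2.17) lean on exactly this weak-supplement mechanism.
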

\begin{proof}
See \cite[Corollary 3.2]{L99}.
\end{proof}

\begin{thm}\label{thm 2.12}
If R is a semilocal ring, then R is right small dual if and only if R is right dual.
\end{thm}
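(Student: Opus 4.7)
The plan is to handle the two directions separately. The ``if'' direction is immediate: every small right ideal is in particular a right ideal, so if every right ideal of $R$ is a right annihilator then so is every small one. All the substance lies in the converse, and the rest of the argument would assume $R$ is semilocal and right small dual, with the goal of proving ${\bf rl}(I)=I$ for every right ideal $I$ of $R$.

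By Lemma 2.4 this reduces to showing that $R/I$ is torsionless for every right ideal $I$. The bridge between arbitrary right ideals and \emph{small} right ideals is furnished by Lemma 2.11: semilocality guarantees that $I$ admits a weak supplement, i.e., a right ideal $K$ with $I+K=R$ and $T:=I\cap K$ small in $R$. Once $T$ is small, right small duality together with Proposition 2.5 yields that $R/T$ is torsionless.

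From $I+K=R$ and $I\cap K=T$, I would then record the internal decomposition $R/T=(I/T)\oplus(K/T)$. Combined with the isomorphism $R/I\cong K/T$ coming from the second isomorphism theorem, this exhibits $R/I$ as a direct summand of the torsionless module $R/T$, hence itself torsionless (a direct summand of a module cogenerated by $R_R$ is cogenerated by $R_R$ by restriction of the cogenerating family). Applying Lemma 2.4 one more time delivers ${\bf rl}(I)=I$, as required.

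The only step that requires any genuine idea is the use of weak supplements to realize an arbitrary right ideal as one of the two summands of a quotient that is controlled by the small-duality hypothesis. Beyond that the proof is purely formal, so I do not anticipate a real obstacle; the main thing to verify carefully is simply that $T=I\cap K$ is literally a small right ideal of $R$ in the sense of Definition 2.1, which is exactly the defining property of a weak supplement in Lemma 2.11.
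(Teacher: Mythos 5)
Your proposal is correct and follows essentially the same route as the paper: reduce to torsionlessness via Lemma 2.4, use semilocality (Lemma 2.11) to produce a weak supplement $K$ with $T=I\cap K$ small, invoke Proposition 2.5 to make $R/T$ torsionless, and transport this to $R/I\cong K/T\subseteq R/T$. The only cosmetic difference is that you observe $K/T$ is a direct summand of $R/T$, whereas the paper only needs (and uses) that it is a submodule; either suffices since submodules of torsionless modules are torsionless.
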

\begin{proof}
We only need to prove the necessity.
Let $I$ be any right ideal of $R$. We will show that $I$ is a right annihilator. According to Lemma 2.4, it is equivalent to prove
 that $R$/$I$ is torsionless. Since $R$ is semilocal, by Lemma 2.11, $I$ has a weak supplement $K$ in $R$. That is, $I$+$K$=$R$ and $I\cap K$ is small.
 As $R$ is right small dual, by Proposition 2.5,  $\frac{R}{I\cap K}\hookrightarrow R_{R} ^{A}$ for some set $A$.
 Thus we have
 \begin{center}
 $\frac{R}{I}=\frac{I+K}{I}\cong\frac{K}{I\cap K}\hookrightarrow\frac{R}{I\cap K}\hookrightarrow R_{R} ^{A}$.
 \end{center}
This shows that  $R$/$I$ is torsionless. Hence $I$ is a right annihilator.
\end{proof}
\indent Since semiperfect rings and left (or right) perfect rings are semilocal, we have
\begin{cor}\label{cor 2.13}
\cite[Lemma 1.5]{Y97}
Suppose R is semiperfect and {\bf rl}(K)=K for every small right ideal K of R. Then {\bf rl}(T)=T for every right ideal T of R.
\end{cor}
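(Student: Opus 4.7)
The plan is to recognize this corollary as an immediate specialization of the preceding Theorem 2.12. First I would unpack the terminology: by Definition 2.1 the hypothesis ``$\mathbf{rl}(K)=K$ for every small right ideal $K$'' is precisely the statement that $R$ is right small dual, and the conclusion ``$\mathbf{rl}(T)=T$ for every right ideal $T$'' is precisely the statement that $R$ is right dual.

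Second I would invoke the standard fact that every semiperfect ring is semilocal. Recall that $R$ is semiperfect if $R/J$ is semisimple and idempotents lift modulo $J$; the first half of that definition is already semilocality. With this translation the hypotheses of Theorem 2.12 are met: $R$ is semilocal and right small dual, hence right dual, which is exactly the desired conclusion.

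There is essentially no obstacle beyond identifying the right tool, so I would present the argument in one or two lines. The only reason the statement deserves the label ``corollary'' is historical: it is Yousif's original formulation in \cite{Y97}, which predates the semilocal generalization, and the present framework subsumes it without modification. If one wished to avoid routing through Lemma 2.11, an alternative (but strictly less efficient) approach would be to use a basic idempotent decomposition of $R$ to write an arbitrary right ideal $I$ as a sum of a direct summand of $R_R$ and a small right ideal, and then argue torsionlessness directly via Proposition 2.5 and Lemma 2.4; but this merely reproves Lemma 2.11 in the semiperfect case.
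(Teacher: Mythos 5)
Your proposal is correct and matches the paper exactly: the corollary is deduced by noting that semiperfect rings are semilocal and then applying Theorem 2.12. Nothing further is needed.
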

\begin{cor}\label{cor 2.14}
\cite[Theorem 3.11]{ZLG10}
If R is a left (or right) perfect, then R is small dual if and only if R is dual.
\end{cor}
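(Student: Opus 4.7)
The plan is to reduce the statement to Theorem 2.12 via the standard fact that one-sided perfect rings are semilocal. Recall that $R$ is left perfect if and only if $R/J$ is semisimple artinian and $J$ is left $T$-nilpotent; the right perfect case is analogous. In either case, the condition $R/J$ semisimple artinian is left-right symmetric, so $R$ is semilocal regardless of which side the perfectness is assumed on.

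Having established that $R$ is semilocal, I would apply Theorem 2.12 directly. The implication ``dual $\Rightarrow$ small dual'' is trivial by the definitions, since every small right (respectively left) ideal is in particular a right (left) ideal. For the converse, assume $R$ is small dual, i.e.\ both left and right small dual. Since $R$ is semilocal, Theorem 2.12 gives that $R$ is right small dual implies $R$ is right dual. By the obvious left-right symmetric analogue of Theorem 2.12 (whose proof uses the left version of Lemma 2.11 and the left version of Proposition 2.5, both valid since semilocality and small duality are two-sided properties in the relevant sense), $R$ being left small dual implies $R$ is left dual. Combining, $R$ is dual.

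There is no real obstacle here; the corollary is a direct packaging of Theorem 2.12 in the perfect setting. The only substantive remark worth making in the writeup is the semilocality of one-sided perfect rings, which can simply be cited from a standard reference (for instance, Bass's characterization of perfect rings). The statement is thus essentially a corollary of Theorem 2.12, with the perfectness hypothesis serving only to guarantee the semilocal condition.
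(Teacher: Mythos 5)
Your proposal is correct and matches the paper's own treatment: the paper derives Corollary 2.14 from Theorem 2.12 by exactly the observation that left (or right) perfect rings are semilocal, applying the theorem on each side. Nothing further is needed.
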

 \indent Next we will get a similar result as that in Theorem 2.12 when the small right ideals of $R$ are restricted to be $n$-generated. We define a ring $R$ to be \emph{J-regular} if \emph{R/J} is a von Neumann regular ring.
\begin{lem}\label{lem 2.15}
\cite[Proposition 2.3]{S10}
The following are equivalent for a ring R.\\
{\rm (1)} $R$ is J-regular.\\
{\rm (2)} Every principal  right (or left) ideal of $R$ has a weak supplement in R.\\
{\rm (3)} Every finitely generated right (or left) ideal of $R$ has a weak supplement in R.
\end{lem}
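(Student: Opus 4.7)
The plan is to establish the cycle $(1) \Rightarrow (3) \Rightarrow (2) \Rightarrow (1)$, working throughout with the right-hand versions of (2) and (3); the left-hand versions follow by an entirely symmetric argument. A standing observation I would invoke is that the Jacobson radical $J$ is itself a small right ideal of $R$: if $J + L = R$ with $L$ a proper right ideal, then $L$ sits inside some maximal right ideal $M$, but then $J \subseteq M$ forces $R = J + L \subseteq M$, contradiction. Consequently every right ideal contained in $J$ is small.

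For $(1) \Rightarrow (3)$, let $I$ be a finitely generated right ideal. Then $(I+J)/J$ is finitely generated in the von Neumann regular ring $R/J$, hence a direct summand. Pulling back a complement produces a right ideal $L \supseteq J$ with $R/J = (I+J)/J \oplus L/J$. I would then check that $L$ is a weak supplement of $I$: the equation $I + L + J = R$ combined with the smallness of $J$ yields $I + L = R$, while any $x \in I \cap L$ satisfies $\overline{x} \in (I+J)/J \cap L/J = 0$, so $I \cap L \subseteq J$ and is therefore small.

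The implication $(3) \Rightarrow (2)$ is immediate. For $(2) \Rightarrow (1)$, take $a \in R$ and let $K$ be a weak supplement of the principal right ideal $aR$. Writing $1 = ac + k$ with $k \in K$ and right-multiplying by $a$ gives $a = aca + ka$. The element $ka$ lies in $K$ (since $K$ is a right ideal) and in $aR$ (since $ka = a(1-ca)$), so $ka \in aR \cap K$; this intersection is small by hypothesis and hence lies in $J$. Reducing modulo $J$ yields $\overline{a} = \overline{a}\,\overline{c}\,\overline{a}$, so $R/J$ is von Neumann regular.

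The delicate step is $(1) \Rightarrow (3)$: one must manufacture a weak supplement inside $R$ from a decomposition that a priori only exists in the quotient $R/J$. Both the passage from $I+L+J = R$ to $I+L = R$ and the conclusion that $I \cap L$ is small rest on the fact that $J$ is a small right ideal (and on the easy observation that submodules of small submodules are small), so pinning down this lemma cleanly is the main technical obstacle; the remaining two implications are essentially bookkeeping.
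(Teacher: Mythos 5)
Your proof is correct: the standing observation that a right ideal is small precisely when it is contained in $J$, the lifting of the direct-sum decomposition of $(I+J)/J$ in the von Neumann regular ring $R/J$ to a weak supplement $L\supseteq J$, and the computation $ka\in aR\cap K\subseteq J$ giving $\overline{a}=\overline{a}\,\overline{c}\,\overline{a}$ are all sound, and the left-hand versions do follow by symmetry since $J$-regularity is a left-right symmetric condition. The paper itself gives no proof of this lemma, citing only \cite[Proposition 2.3]{S10}, so there is nothing internal to compare against; your argument is the standard one and there is no gap.
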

\begin{lem}\label{lem 2.16}
\cite[Lemma 2.9]{S10}
Let R be a ring, $b, r_{i}, a_{i}\in R$, $i=1,2,\ldots,n$, such that $b+\sum_{i=1}^{n}a_{i}r_{i}=1$.
Then $bR\cap \sum_{i=1}^{n}a_{i}R=\sum_{i=1}^{n}ba_{i}R$.
\end{lem}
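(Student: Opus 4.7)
The plan is to prove the equality $bR\cap \sum_{i=1}^{n}a_{i}R=\sum_{i=1}^{n}ba_{i}R$ by establishing the two inclusions separately, using the hypothesis $b+\sum_{i=1}^{n}a_{i}r_{i}=1$ in a different way for each.

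For the inclusion $\sum_{i=1}^{n}ba_{i}R\subseteq bR\cap \sum_{i=1}^{n}a_{i}R$, the containment $ba_{i}R\subseteq bR$ is immediate. To see $ba_{i}R\subseteq \sum_{j}a_{j}R$, I would right-multiply the given identity by $a_{i}$, obtaining $ba_{i}=a_{i}-\sum_{j}a_{j}r_{j}a_{i}\in \sum_{j}a_{j}R$, from which $ba_{i}R\subseteq \sum_{j}a_{j}R$ follows because $\sum_{j}a_{j}R$ is a right ideal.

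For the reverse inclusion, I would pick $x\in bR\cap \sum_{i}a_{i}R$ and write it in two ways: $x=bs$ for some $s\in R$ and $x=\sum_{i=1}^{n}a_{i}t_{i}$ for some $t_{i}\in R$. The crucial move is to right-multiply the identity $1=b+\sum_{j}a_{j}r_{j}$ by $s$ to get $s=bs+\sum_{j}a_{j}r_{j}s=x+\sum_{j}a_{j}r_{j}s$. Left-multiplying this equation by $b$ pulls a factor of $b$ in front of each $a_{j}$, yielding $bs=bx+\sum_{j}ba_{j}r_{j}s$. Since $bx=\sum_{i}ba_{i}t_{i}$, combining these gives $x=\sum_{i}ba_{i}(t_{i}+r_{i}s)\in \sum_{i=1}^{n}ba_{i}R$.

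The main obstacle is the reverse inclusion: one must use both presentations of $x$ together with the hypothesis to trade a factor $a_{i}$ on the right for a factor $ba_{i}$ on the right. The key idea is the two-step manoeuvre of first solving for $s$ via the identity and \emph{then} left-multiplying by $b$, which is exactly what converts each $a_{j}$-term into a $ba_{j}$-term and places $x$ inside $\sum_{i}ba_{i}R$ rather than merely inside $\sum_{i}a_{i}R$.
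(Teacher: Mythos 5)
Your proof is correct: both inclusions check out, and the key computation $x=bs=bx+\sum_{j}ba_{j}r_{j}s$ with $bx=\sum_{i}ba_{i}t_{i}$ is exactly the standard argument for this modular-law-type identity. The paper itself does not reproduce a proof (it only cites \cite[Lemma 2.9]{S10}), but your argument is the expected one and is complete.
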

\indent Using  a similar proof in Theorem 2.12, we have
\begin{thm}\label{thm 2.17}
Let R be a J-regular ring. For a given integer $n\geq 1$,   every n-generated small right ideal of $R$ is a right annihilator
if and only if every n-generated right ideal of R is a right annihilator.
\end{thm}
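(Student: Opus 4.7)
The plan is to imitate the proof of Theorem 2.12, with Lemma 2.15 (weak supplements for finitely generated ideals in a $J$-regular ring) replacing Lemma 2.11, and Lemma 2.16 used to keep the intersection $n$-generated. The reverse implication is immediate: an $n$-generated small right ideal is, in particular, an $n$-generated right ideal, so the stronger hypothesis automatically makes it a right annihilator.

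For the forward direction, let $I=\sum_{i=1}^{n}a_{i}R$ be an arbitrary $n$-generated right ideal; I aim to show that $R/I$ is torsionless and then invoke Lemma 2.4. By Lemma 2.15, $I$ admits a weak supplement $K$ in $R$, so $I+K=R$ with $I\cap K$ small. Writing $1=\sum_{i=1}^{n}a_{i}r_{i}+b$ with $b\in K$ and $r_{i}\in R$ gives $bR+I=R$, and Lemma 2.16 then yields $bR\cap I=\sum_{i=1}^{n}ba_{i}R$, which is again $n$-generated. Since $bR\subseteq K$, we also have $bR\cap I\subseteq K\cap I$, so $bR\cap I$ is small as well.

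Thus $bR\cap I$ is an $n$-generated small right ideal, hence a right annihilator by hypothesis. Lemma 2.4 then yields that $R/(bR\cap I)$ is torsionless, and the standard embedding
\[
R/I=(bR+I)/I\cong bR/(bR\cap I)\hookrightarrow R/(bR\cap I)
\]
shows $R/I$ is torsionless too. A second application of Lemma 2.4 identifies $I$ as a right annihilator.

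The only delicate point is maintaining both smallness \emph{and} $n$-generatedness of the intersection: smallness is automatic from the containment in $K\cap I$, but a generic weak supplement need not produce an $n$-generated intersection. The use of Lemma 2.16 with the specific decomposition of $1$ coming from $I+bR=R$ is the crucial ingredient forcing $bR\cap I$ to be generated by the $n$ elements $ba_{1},\ldots,ba_{n}$, which is what allows the hypothesis on $n$-generated small ideals to be applied.
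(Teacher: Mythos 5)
Your proposal is correct and follows essentially the same route as the paper: Lemma 2.15 to obtain a weak supplement $K$, the decomposition $1=b+\sum a_ir_i$ with $b\in K$, Lemma 2.16 to see that $bR\cap I=\sum ba_iR$ is $n$-generated and small, and then the embedding $R/I\cong bR/(bR\cap I)\hookrightarrow R/(bR\cap I)$ together with Lemma 2.4. No gaps; your closing remark about why Lemma 2.16 is the crucial ingredient matches exactly the point of the paper's argument.
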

\begin{proof}
We only need to prove the necessity. Let $I=a_{1}R+\cdots +a_{n}R$ be an $n$-generated right ideal of $R$, by Lemma 2.4, we need to show that $R/I$ is torsionless as a right $R$-module. Since $R$ is right $J$-regular,  by Lemma 2.15, $I$ has a weak supplement in $R$. Thus, there exists a right ideal $K$ of $R$ such that $I+K=R$ and $I\cap K\subseteq J$. It is easy to see there are $r_{1}, \ldots, r_{n}\in R$, $b\in K$ such that $b+\sum_{i=1}^{n}a_{i}r_{i}=1$ and
$I\cap bR\subseteq I\cap K\subseteq J$. Therefore, $I\cap bR$ is a small right ideal of $R$. By Lemma 2.16, $I\cap bR=\sum_{i=1}^{n}ba_{i}R$ is $n$-generated. By the assumption,  $I\cap bR$ is a right annihilator. According to  Lemma 2.4, $R/(I\cap bR)$ is torsionless. Thus
\begin{center}
 $\frac{R}{I}=\frac{I+bR}{I}\cong\frac{bR}{I\cap bR}\hookrightarrow\frac{R}{I\cap bR}\hookrightarrow R_{R} ^{A}$ for some set $A$.
 \end{center}

\end{proof}
\indent Recall that a ring $R$ is called \emph{semiregular} if $R$ is $J$-regular and idempotents can lift modulo $J$.  It is clear that a semiregular or semilocal ring is $J$-regular, so we have
\begin{cor}\label{cor 2.18}
Let R be a semiregular or semilocal ring. For a given integer $n\geq 1$,   every n-generated small right ideal of $R$ is a right annihilator
if and only if every n-generated right ideal of R is a right annihilator.
\end{cor}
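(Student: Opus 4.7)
The plan is to reduce Corollary 2.18 immediately to Theorem 2.17 by verifying that the two classes of rings in the hypothesis are both $J$-regular, which is the only condition Theorem 2.17 requires.

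First I would recall the definitions. A ring $R$ is \emph{semiregular} exactly when $R/J$ is von Neumann regular and idempotents lift modulo $J$; in particular, by definition, every semiregular ring is $J$-regular. A ring $R$ is \emph{semilocal} when $R/J$ is semisimple artinian; since every semisimple ring is von Neumann regular, every semilocal ring is also $J$-regular. This is essentially the observation the author already makes just before stating the corollary.

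Next, having established that $R$ is $J$-regular in either case, I would apply Theorem 2.17 verbatim with the same integer $n$. Theorem 2.17 asserts exactly the desired equivalence under the sole hypothesis of $J$-regularity, so both directions transfer without further work. No additional use of the semilocal/semiregular structure (such as lifting idempotents, or the existence of a weak supplement for \emph{every} right ideal as in Lemma 2.11) is needed, because Theorem 2.17 is already proved using only Lemma 2.15, which requires $J$-regularity.

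There is essentially no obstacle here: the corollary is a direct specialization. The only thing worth being careful about is to cite the correct characterization of $J$-regularity for each case (the defining property for semiregular, and the semisimple-implies-regular observation for semilocal), so that the reader sees both hypotheses fall under the umbrella of Theorem 2.17.
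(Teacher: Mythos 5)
Your proposal is correct and matches the paper's argument exactly: the corollary is stated as an immediate consequence of Theorem 2.17 once one notes that semiregular rings are $J$-regular by definition and semilocal rings are $J$-regular because $R/J$ is semisimple artinian, hence von Neumann regular. Nothing further is needed.
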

\begin{cor}\label{cor 2.19}
\cite[Proposition 3.6]{ZLG10}
Suppose that R is a semiregular and right small dual ring. Then
\begin{enumerate}
\item R is a left C2 ring.
\item $J=Z_{l}$.
\end{enumerate}
\end{cor}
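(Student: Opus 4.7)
The plan is to invoke Corollary~2.18 with $n=1$: since $R$ is semiregular and right small dual, every principal small right ideal is a right annihilator, hence \emph{every} principal right ideal $aR$ of $R$ satisfies ${\bf rl}(aR)=aR$. This single consequence drives both parts.

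For (1), let $Ra\cong Re$ via an isomorphism $\phi$ of left $R$-modules, where $e^{2}=e$, and replace $a$ by $\phi(e)$, so that $Ra=R\phi(e)$ and ${\bf l}(a)={\bf l}(e)=R(1-e)$. Then $(1-e)a=0$, giving $a=ea$. Computing annihilators,
$$aR={\bf rl}(aR)={\bf r}({\bf l}(a))={\bf r}(R(1-e))=eR,$$
so $e\in aR$, i.e.\ $e=ad$ for some $d\in R$. Multiplying by $a$ on the right yields $ada=ea=a$. Setting $f:=da$, one checks $f^{2}=d(ada)=da=f$; furthermore $f=da\in Ra$ forces $Rf\subseteq Ra$, while $a=ada=af\in Rf$ forces $Ra\subseteq Rf$. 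Hence $Ra=Rf$ is a direct summand of $_{R}R$, proving the left C2 condition. The only subtle point is the reduction to ${\bf l}(a)={\bf l}(e)$, but this is immediate from the fact that $\phi$ identifies the kernels of right multiplication by $e$ and by $\phi(e)$.

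For (2), the inclusion $J\subseteq Z_{l}$ is Proposition~2.6(2); I need the reverse. Given $a\in Z_{l}$, every $y\in {\bf rl}(aR)={\bf r}({\bf l}(a))$ satisfies ${\bf l}(y)\supseteq{\bf l}(a)$, which is essential in $_{R}R$, so $y\in Z_{l}$; hence $aR={\bf rl}(aR)\subseteq Z_{l}$. Now apply the Bass-type decomposition for a semiregular ring on the right side: $aR=eR\oplus S$ with $e^{2}=e$ and $S\subseteq J$. If $e\neq 0$, then $e\in aR\subseteq Z_{l}$, forcing ${\bf l}(e)=R(1-e)$ to be essential in $_{R}R$; but $Re\cap R(1-e)=0$ with $Re\neq 0$ is a contradiction. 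Hence $e=0$, $aR=S\subseteq J$, and $a\in J$, giving $Z_{l}\subseteq J$ as desired. The main obstacle is simply recognizing the right-sided semiregular decomposition and that left-singularity of $e$ clashes with idempotency; once that is in view the argument is a clean three-line contradiction.
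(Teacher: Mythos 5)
Your proof is correct, and it shares its decisive first step with the paper: both invoke Corollary~2.18 with $n=1$ to get ${\bf rl}(aR)=aR$ for every $a\in R$, i.e.\ that $R$ is left $P$-injective. From there the paper simply cites Nicholson--Yousif (Lemma~5.1, Proposition~5.10, Theorem~5.14 of \cite{NY03}), whereas you reprove the needed facts by hand. For (1) your idempotent computation ($e=ad$, $f=da$, $Ra=Rf$) is precisely the standard argument behind ``left $P$-injective $\Rightarrow$ left C2,'' so there is no real divergence, only an unpacking of the citation; the reduction to $Ra$ with ${\bf l}(a)={\bf l}(e)$ is legitimate since any left ideal isomorphic to the cyclic module $Re$ is itself cyclic. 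For (2) you do take a genuinely different route on the inclusion $Z_{l}\subseteq J$: the paper extracts it from left $P$-injectivity, while you obtain it purely from semiregularity (write $aR=eR\oplus S$ with $S\subseteq J$ and observe that a nonzero idempotent cannot lie in $Z_{l}$). This has the virtue of isolating which hypothesis does what: the hard half of (2) needs only semiregularity, and the small-dual hypothesis enters only through Proposition~2.6(2) for $J\subseteq Z_{l}$. One small redundancy: the step $aR={\bf rl}(aR)\subseteq Z_{l}$ is unnecessary, since $Z_{l}$ is a two-sided ideal and $a\in Z_{l}$ already forces $aR\subseteq Z_{l}$; it is harmless but hides the fact that this part of your argument does not use duality at all.
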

\begin{proof}
By the above corollary, every principle right ideal of $R$ is a right annihilator. Thus $R$ is left $P$-injective by \cite[Lemma 5.1]{NY03}. Then the left is clear by \cite[Proposition 5.10, Theorem 5.14]{NY03}.
\end{proof}
\section{Applications}
\indent\indent Recall that a ring $R$ is called \emph{right simple injective} if every homomorphism $f$ from a right ideal $I$ of $R$ to $R_{R}$ with simple image can be extended to one from $R_{R}$ to $R_{R}$. Left simple injective rings can be defined analogously.
\begin{thm}\label{thm 3.1}The following are equivalent for a ring R.
\begin{enumerate}
\item R is a dual ring.
\item R is a semilocal and small dual ring.
\item R is a semiperfect and small dual ring.
\item R is a two-sided Kasch and two-sided simple injective ring.
\end{enumerate}
\end{thm}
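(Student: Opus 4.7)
The plan is to establish the cycle $(1)\Rightarrow(3)\Rightarrow(2)\Rightarrow(1)$ among the first three conditions and then close the equivalence with $(1)\Leftrightarrow(4)$.

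For $(1)\Rightarrow(3)$, I will invoke the classical theorem of Hajarnavis--Norton \cite{HN85} that every dual ring is semiperfect; that such an $R$ is small dual is immediate, since every right or left ideal---hence every small one---is already an annihilator. The step $(3)\Rightarrow(2)$ is the fact that a semiperfect ring is semilocal. For $(2)\Rightarrow(1)$, I apply Theorem 2.12 on the right and, using the left version of Lemma 2.11 in an entirely symmetric proof, on the left; this yields that every right and every left ideal of $R$ is an annihilator.

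For $(1)\Leftrightarrow(4)$, the direction $(1)\Rightarrow(4)$ has a trivial Kasch part (maximal right ideals are right annihilators by hypothesis) together with a simple-injective part: given $f\colon I\to R_R$ with simple image $S=f(I)$, factor through $I/\ker f\cong S$, use right Kasch to realize $S$ inside $R_R$ as a minimal right ideal, and build an extension $f=\cdot\, c$ by selecting an appropriate $c\in R$, in the spirit of the proof of Proposition 2.6(4). For the harder direction $(4)\Rightarrow(1)$, I take an arbitrary right ideal $I$ and prove $\mathbf{rl}(I)=I$ by contradiction: pick $b\in\mathbf{rl}(I)\setminus I$; the nonzero cyclic submodule $(bR+I)/I$ of $R/I$ admits a simple quotient $T$ on which $b+I$ is nonzero, and by right Kasch $T$ embeds in $R_R$. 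The resulting map $g\colon bR+I\to R_R$ vanishes on $I$, has simple image, and sends $b$ to a nonzero element; right simple injectivity lifts $g$ to left multiplication by some $c\in R$, whence $c\in\mathbf{l}(I)$ with $cb\neq 0$, contradicting $b\in\mathbf{rl}(I)$. The left side is entirely symmetric.

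The principal obstacle is $(4)\Rightarrow(1)$: right simple injectivity is strictly weaker than right $P$-injectivity, so the identity $\mathbf{rl}(aR)=aR$ for principal right ideals that \cite[Lemma 5.1]{NY03} provides in the $P$-injective case is not at one's disposal. The Kasch hypothesis carries essential weight, supplying enough simple right ideals of $R$ to witness failures of annihilator closure; without it a simple-injective ring can easily fail to be dual. A smaller subtlety is that $(1)\Rightarrow(3)$ genuinely requires the Hajarnavis--Norton semiperfectness theorem rather than only semilocality, so that the three conditions (1)--(3) actually coincide and not merely (1) and (2).
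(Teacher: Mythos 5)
Your handling of the equivalence of (1), (2) and (3) coincides with the paper's: both rest on the Hajarnavis--Norton theorem that dual rings are semiperfect, the observation that semiperfect implies semilocal, and Theorem 2.12 applied on each side (the paper states Theorem 2.12 only for right ideals, but its proof via Lemma 2.11 is left--right symmetric, so your appeal to the left version is legitimate). For (1)$\Leftrightarrow$(4) the paper simply cites \cite[Theorem 6.18]{NY03}, whereas you reprove it. Your argument for the harder direction (4)$\Rightarrow$(1) is correct and complete in outline: a nonzero cyclic module has a simple quotient, right Kasch embeds that quotient into $R_R$, and right simple injectivity realizes the resulting map $bR+I\to R_R$ as left multiplication by some $c\in {\bf l}(I)$ with $cb\neq 0$, contradicting $b\in {\bf rl}(I)$.

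The gap is in (1)$\Rightarrow$(4), in the simple-injectivity half. Your sketch says to ``use right Kasch to realize $S$ inside $R_R$'' --- but $S=f(I)$ already lies in $R_R$, so this step does no work --- and then to ``build an extension by selecting an appropriate $c$,'' with no recipe for $c$. The method of Proposition 2.6(4) does not transfer: that argument is an annihilator computation for a \emph{principal} ideal (giving $f(t)\in{\bf rl}(t)=tR$), whereas here $I$ is an arbitrary right ideal and the analogous containment only yields $f(I)\subseteq{\bf lr}(I)$, which does not extend $f$. The missing idea is this: put $K=\ker f$; since $f(I)$ is simple, $K$ is maximal in $I$. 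As $R$ is right dual, $K={\bf rl}(K)$ and $I={\bf rl}(I)$ with $K\subsetneq I$, hence ${\bf l}(I)\subsetneq{\bf l}(K)$; choose $a\in{\bf l}(K)\setminus{\bf l}(I)$. Then $x\mapsto ax$ kills $K$ but not $I$, so its kernel on $I$ is exactly $K$ and $aI\cong I/K\cong f(I)$ is a minimal right ideal. The induced map $aI\to R_R$, $ax\mapsto f(x)$, is a homomorphism from a minimal right ideal, and $R$ is right mininjective (the left--right mirror of Proposition 2.6(4), since a dual ring is small dual on both sides), so it extends to $c\cdot$ for some $c\in R$, giving $f=(ca)\cdot$ on $I$. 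Note also that the extension of a right-module map $f\colon I\to R_R$ is left multiplication $c\cdot$, not $\cdot c$ as written. None of this affects your chain (1)--(3), which is sound.
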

\begin{proof}
(1)$\Leftrightarrow$(2)$\Leftrightarrow$(3). By \cite[Theorem 3.9]{HN85}, A dual ring is a semiperfect ring.  Since a semiperfect ring is semilocal, by Theorem 2.12, $R$ is dual if and only if $R$ is semilocal and small dual if and only if $R$ is semiperfect and small dual.\\
\indent (1)$\Leftrightarrow$(4). See \cite[Theorem 6.18]{NY03}.
\end{proof}
\begin{prop}\label{prop 3.2}
 Let R be a right small dual ring. Then J is nilpotent if R satisfies any one of the following chain conditions:
\begin{enumerate}
\item R satisfies ACC on left annihilators.
\item R satisfies ACC on essential left ideals.
\item R satisfies ACC on essential right ideals.
\end{enumerate}
\end{prop}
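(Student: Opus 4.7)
My plan leverages the two consequences of right small duality recorded in Proposition~\ref{Prop 2.6}: $J\subseteq Z_{l}$ and ${\bf l}(J)\subseteq_{ess}\,{}_{R}R$. Since every power $J^{k}\subseteq J$ is itself small, right small duality also gives ${\bf rl}(J^{k})=J^{k}$. Case~(1) is then immediate from the Mewborn--Winton theorem: ACC on left annihilators forces the left singular ideal $Z_{l}$ to be nilpotent, hence $J\subseteq Z_{l}$ is nilpotent.

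For cases~(2) and~(3), I would first show that the ascending chain of left annihilators ${\bf l}(J)\subseteq{\bf l}(J^{2})\subseteq\cdots$ stabilizes at some $n$, so that right small duality yields $J^{n}={\bf rl}(J^{n})={\bf rl}(J^{n+1})=J^{n+1}$. Every term of the chain contains the essential left ideal ${\bf l}(J)$, so in case~(2) ACC on essential left ideals closes this step directly. In case~(3), since the correspondence $T\mapsto{\bf l}(T)$ is injective on small right ideals (each such $T$ being a right annihilator), it suffices to force the descending chain of small right ideals $J^{k}$ to stabilize; for this, I would adjoin a suitable right ideal $D$ to each $J^{k}$ so that $J^{k}+D$ becomes an ascending chain of essential right ideals to which ACC on essential right ideals applies, and pull the resulting stabilization back to ${\bf l}(J^{k})$.

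To conclude $J^{n}=0$ from $J^{n}=J^{n+1}$, I would argue by contradiction. Assume $J^{n}\neq 0$ and pick $a\in J^{n}\setminus\{0\}$ with ${\bf l}(a)$ maximal in $\{{\bf l}(b):0\neq b\in J^{n}\}$---each member of this family is essential (since $b\in J\subseteq Z_{l}$), and a maximal element exists by the chain condition. For any $r\in R$ with $ar\neq 0$, the containment ${\bf l}(a)\subseteq{\bf l}(ar)$ and maximality force ${\bf l}(ar)={\bf l}(a)$; essentiality of ${\bf l}(a)$ applied to $Rar$ produces $s\in R$ with $0\neq sar\in{\bf l}(a)$, whence $sara=0$. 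If $ara\neq 0$, then ${\bf l}(ara)\supsetneq{\bf l}(a)$, contradicting maximality, so $aRa=0$ and in particular $a^{2}=0$; combined with $a\in J^{n}=JJ^{n}$, a further iteration of the same argument on the summands of $a$ forces $a=0$, the desired contradiction.

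The main obstacle is case~(3): the descending chain $J^{k}$ must be forced to stabilize via the right-sided ACC, yet the natural candidate ${\bf r}(J^{k})$ is not a priori essential in $R_{R}$, so the essential-complement construction has to be chosen carefully. A secondary delicacy is the final iteration: upgrading ``$a^{2}=0$ and $aRa=0$ for every maximal $a$'' to the global statement $J^{n}=0$ requires a Nakayama-style or iterated-maximality argument applied to the summands in a decomposition $a=\sum y_{i}j_{i}$, $y_{i}\in J^{n}$, $j_{i}\in J$.
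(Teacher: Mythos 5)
There are two genuine gaps. First, your concluding step for case (2) does not work as written. From the maximality of ${\bf l}(a)$ over $\{{\bf l}(b):0\neq b\in J^{n}\}$ you correctly derive $aRa=0$, but that is a fact about one specially chosen element, and the proposed ``further iteration on the summands of $a=\sum y_{i}j_{i}$'' is not an argument: nothing relates ${\bf l}(y_{i})$ or ${\bf l}(j_{i})$ to the maximal annihilator ${\bf l}(a)$, so you cannot propagate the conclusion to force $a=0$, let alone $J^{n}=0$. The standard (Mewborn--Winton style) fix is to maximize ${\bf l}(a)$ over the family $\{{\bf l}(a): a\in J,\ aJ^{n}\neq 0\}$ instead (each member is an essential left ideal since $J\subseteq Z_{l}$, so ACC on essential left ideals, or on left annihilators, supplies a maximal element): for any $t\in J$, essentiality of ${\bf l}(t)$ gives $r$ with $ra\neq 0=rat$, so ${\bf l}(at)\supsetneq{\bf l}(a)$ and maximality forces $atJ^{n}=0$; running over $t\in J$ gives $aJ^{n+1}=0$, hence $a\in{\bf l}(J^{n+1})={\bf l}(J^{n})$, contradicting $aJ^{n}\neq 0$. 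This closes (1) and (2) without ever invoking $J^{n}=J^{n+1}$.

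Second, case (3) is essentially missing, as you half-acknowledge: the powers $J^{k}$ form a \emph{descending} chain, so ``adjoining $D$ to make $J^{k}+D$ an ascending chain of essential right ideals'' cannot be carried out, and ${\bf r}(J^{k})$ indeed need not be essential. The paper's route is different and you should compare it: by a lemma of Dung--Huynh--Wisbauer, ACC on essential right ideals forces $R/S_{r}$ to be right noetherian; each ${\bf l}(J^{k})$ is a two-sided ideal containing $S_{r}$ (since $S_{r}J=0$), so the ascending chain ${\bf l}(J)\subseteq{\bf l}(J^{2})\subseteq\cdots$ of right ideals stabilizes modulo $S_{r}$, and right small duality gives $J^{n}={\bf rl}(J^{n})={\bf rl}(J^{n+1})=J^{n+1}$. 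Now the finite generation you were missing appears for free: $\overline{J^{n}}=(J^{n}+S_{r})/S_{r}$ is a finitely generated module over the noetherian ring $R/S_{r}$ with $\overline{J^{n}}=\overline{J^{n}}J$, so Nakayama yields $J^{n}\subseteq S_{r}$ and hence $J^{n+1}\subseteq S_{r}J=0$. Your reduction of case (1) to the Mewborn--Winton theorem via $J\subseteq Z_{l}$ is correct and matches what the cited source does.
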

\begin{proof}
(1) and (2) are obtained by \cite[Corollary 3.3, Theorem 3.4]{ZLG10}.\\
 \indent (3). If $R$ satisfies ACC on essential right ideals, by \cite[Lemma 2]{DHW89}, $R/S_{r}$ is right noetherian. Given chain $J\supseteq J^{2}\supseteq J^{3}\supseteq \cdots$, we have chain of right ideals $S_{r}\subseteq {\bf l}(J)\subseteq {\bf l}(J^{2})\subseteq {\bf l}(J^{3})\subseteq \cdots$. Since $R/S_{r}$ is right noetherian, there exists integer $n\geq 1$
 such that {\bf l}($J^{n}$)={\bf l}($J^{n+1}$). Thus {\bf rl}($J^{n}$)={\bf rl}($J^{n+1}$). As $R$ is right small dual, $J^{n}={\bf rl}(J^{n})={\bf rl}(J^{n+1})=J^{n+1}$. For each right ideal $I$ of $R$, set $\overline{I}=(I+S_{r})/S_{r}$. Then $\overline{R}$ is noetherian. Thus $\overline{J^{n}}$ is finitely generated and $\overline{J^{n}}=\overline{J^{n+1}}$=$\overline{J^{n}}J$. By Nakayama's Lemma (\cite[Corollary 15.13]{AF92}), $\overline{J^{n}}=\overline{0}$. Hence $J^{n}\subseteq S_{r}$. This informs that $J^{n+1}\subseteq S_{r}J=0$.
\end{proof}
\begin{que}\label{que 3.3}
Let R be a right small dual ring. If R satisfies ACC on right annihilators, is J nilpotent?
\end{que}
\begin{prop}\label{prop 3.4}
Let R be a small dual ring. Then
\begin{enumerate}
\item $R$ is left and right mininjective. In particular, $S_{r}=S_{l}$.
\item ${\bf l}(J)\subseteq_{ess}$ $_{R}R$ and ${\bf r}(J)\subseteq_{ess}$ $R_{R}$.
\item J $\subseteq Z_{l}$ and J $\subseteq Z_{r}$.
\item ${\bf rl}(T)=T$ and ${\bf lr}(K)=K$ for every finitely generated semisimple right ideal T and left ideal K of R.
\end{enumerate}
\end{prop}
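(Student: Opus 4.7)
The plan is to obtain Proposition 3.4 as a left and right symmetric consequence of Proposition 2.6. Since $R$ is small dual it is both right small dual and left small dual, so every conclusion of Proposition 2.6 is available in both orientations by exchanging the roles of left and right throughout.

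For part (2), Proposition 2.6(1) applied to $R$ gives that ${\bf l}(J)$ is essential in ${}_{R}R$, while the mirror statement on the left small dual side gives that ${\bf r}(J)$ is essential in $R_{R}$. Part (3) is handled identically by reading Proposition 2.6(2) in both orientations. Part (4) follows the same way from Proposition 2.6(3); note that the hypothesis ``finitely generated semisimple one-sided ideal'' is itself symmetric under the left--right interchange, so no additional argument is required.

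For part (1), Proposition 2.6(4) applied to $R$ as a right small dual ring gives that $R$ is left mininjective, and applied to $R$ as a left small dual ring gives that $R$ is right mininjective. The only remaining content is the socle equality $S_{r}=S_{l}$, and this is the step I expect to require the most care. My plan for it is the standard mininjective argument: when $R$ is left mininjective and $a\in R$ is such that $aR$ is a minimal right ideal, then $Ra$ is either zero or simple as a left ideal; consequently every generator of a minimal right ideal lies in $S_{l}$, and since $S_{l}$ is a two-sided ideal of $R$, this forces $S_{r}\subseteq S_{l}$. The reverse inclusion $S_{l}\subseteq S_{r}$ is the mirror argument using right mininjectivity of $R$.

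The one genuinely delicate ingredient is the underlying lemma that left mininjectivity makes $Ra$ simple whenever $aR$ is simple. This is a short, well documented fact in mininjective theory (see, e.g., the results underlying Proposition 5.10 in \cite{NY03}), so I anticipate no real obstacle; all other items of Proposition 3.4 are read off directly from Proposition 2.6 by symmetry.
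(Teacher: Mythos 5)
Your proposal is correct and follows essentially the same route as the paper: every item is read off from Proposition 2.6 applied to both the right and the left small dual structure, and the socle equality is exactly \cite[Theorem 2.21]{NY03} (which the paper simply cites, while you sketch its proof). One trivial remark: the orientations in your sketch are swapped --- it is \emph{right} mininjectivity that makes $Ra$ simple when $aR$ is simple and hence gives $S_{r}\subseteq S_{l}$, with left mininjectivity giving the reverse inclusion --- but since $R$ is mininjective on both sides this does not affect the conclusion.
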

\begin{proof}
By Proposition 2.6 (4), $R$ is left and right mininjective. Thus, by \cite[Theorem 2.21]{NY03}, $S_{r}=S_{l}$. (2), (3) and (4) are directly obtained by Proposition 2.6.
\end{proof}
\indent At last we show some characterizations of QF rings given by small dualities of ring.
\begin{lem}\label{lem 3.5}
\cite[Proposition 3]{NY97}
Suppose R is a left perfect, left and right simple injective ring. Then R is a quasi-Frobenius ring.
\end{lem}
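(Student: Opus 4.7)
The plan is to combine the small-dual machinery of this paper with the simple-injective techniques of Nicholson-Yousif: I will first show $R$ is a dual ring by verifying the two-sided Kasch condition in Theorem 3.1(4), and then upgrade ``left perfect plus dual'' to QF.

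For duality, I would extract essential-socle information from left perfectness. By the standard Bass characterization of left perfect rings, $R/J$ is semisimple and $J$ is $T$-nilpotent in the direction that forces every nonzero left $R$-module to have essential socle; in particular $S_l\subseteq_{ess}{}_RR$. Two-sided simple injectivity implies two-sided mininjectivity, so \cite[Theorem 2.21]{NY03} (as invoked in Proposition 3.4(1)) yields $S_l=S_r$, whence $S_r\subseteq_{ess}{}_RR$ as well. Since $R$ is also right simple injective and semiperfect, a standard argument extends a nonzero map from an essential right ideal onto a simple module to all of $R$, forcing every simple right $R$-module to embed into $R_R$; thus $R$ is right Kasch, and symmetrically left Kasch. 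Applying Theorem 3.1$(4)\Rightarrow(1)$, $R$ is a dual ring.

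For the passage from ``left perfect plus dual'' to QF, I would use the annihilator bijection supplied by duality. Left perfectness yields DCC on principal right ideals; under the bijection between right ideals and right annihilators (every right ideal $I$ equals ${\bf rl}(I)$), this translates into an ACC on the dual chain of left annihilators of finite subsets. Combined with $T$-nilpotence of $J$, this forces $J$ to be nilpotent; a dual ring with $J$ nilpotent and $R/J$ semisimple is then right artinian, and a right artinian two-sided dual ring is QF by definition. The main obstacle is precisely this last transfer: one has to handle the annihilator bijection carefully, together with the simple-injective hypothesis, to translate the left-perfect chain condition into genuine right-noetherianness without sliding into the still-open Faith-conjecture territory of ``left perfect plus right self-injective implies QF.''
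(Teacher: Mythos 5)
The paper does not prove this statement at all --- it is quoted verbatim from \cite[Proposition 3]{NY97} --- so your sketch has to stand on its own, and as written it does not. Two problems in the first half: left perfectness gives DCC on principal right ideals, hence that every nonzero \emph{right} module has essential socle, i.e. $S_r\subseteq_{ess}R_R$; your claim that it yields $S_l\subseteq_{ess}{}_RR$ is the wrong side (that would follow from \emph{right} perfectness). Consequently the left Kasch condition cannot be obtained ``symmetrically'': the hypotheses are one-sided, and left Kasch has to be extracted from the right minfull structure (semiperfect $+$ right mininjective $+$ $S_r\subseteq_{ess}R_R$, cf. \cite[Theorem 3.12]{NY03}), not from a mirrored argument. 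These are repairable by citing the minfull machinery correctly, and then Theorem 3.1 does give that $R$ is dual.

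The second half is the genuine gap, and it is exactly where the content of Nicholson--Yousif's proposition lies. Your route is: DCC on principal right ideals plus the annihilator bijection gives ACC on left annihilators of finite subsets, and this ``combined with $T$-nilpotence forces $J$ nilpotent.'' But the chain one needs to stabilize is ${\bf l}(J)\subseteq{\bf l}(J^{2})\subseteq\cdots$, and the ideals $J^{k}$ are not finitely generated, so ACC on left annihilators of finite subsets says nothing about it; no mechanism is given for why it terminates. Even granting $J$ nilpotent, the assertion that a semiprimary dual ring is right artinian requires each layer $J^{i}/J^{i+1}$ to be finitely generated, which is not justified and is not automatic for dual rings. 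You acknowledge this transfer as ``the main obstacle,'' which amounts to conceding that the proof stops short precisely at the hard step; the actual argument of \cite{NY97} has to re-use simple injectivity here (showing the socle layers are finite direct sums of simples over the finitely many local idempotents) rather than run the chain conditions through the annihilator duality alone.
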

\begin{lem}\label{lem 3.6}
\cite[Theorem 2.5]{SC06}
If R is a left and right mininjective ring with ACC on right annihilators in which  $S_{r}\subseteq_{ess}R_{R}$, then R is QF.
\end{lem}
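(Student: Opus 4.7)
The plan is to strengthen the hypotheses until they fit a known QF criterion: first show that $R$ is semiprimary with essential socle on both sides, and then invoke the characterization of QF rings for semiperfect two-sided mininjective rings.

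\textbf{Step 1: $J$ is nilpotent.} A classical theorem of Mewborn--Winton says that ACC on right annihilators forces the right singular ideal $Z_r$ to be nilpotent. So it suffices to prove $J\subseteq Z_r$, i.e.\ that ${\bf r}(a)\subseteq_{ess} R_R$ for each $a\in J$. This is where right mininjectivity and essentiality of $S_r$ enter: for a minimal right ideal $kR\subseteq S_r$, right mininjectivity yields ${\bf rl}(k)=kR$, and using $a\in J$ one checks that ${\bf r}(a)$ meets $kR$; since $S_r\subseteq_{ess}R_R$, this upgrades to ${\bf r}(a)$ being essential in $R_R$.

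\textbf{Step 2: $R$ is semilocal.} ACC on right annihilators bounds the right Goldie dimension of $R$, so the essential right socle $S_r$ is a finite direct sum of minimal right ideals. Since $R$ is two-sided mininjective, $S_r=S_l$ (exactly as in the proof of Proposition~3.4 via \cite[Theorem~2.21]{NY03}), so the left socle is finitely generated as well. Combined with $J$ nilpotent this forces $R/J$ to be a finite product of matrix rings over division rings, i.e.\ $R$ is semilocal.

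\textbf{Step 3: conclude $R$ is QF.} Steps~1 and~2 give that $R$ is semiprimary. Since $J$ is nilpotent, idempotents lift modulo $J$, so $R$ is in fact semiperfect. We are now in the setting of a semiperfect two-sided mininjective ring with essential right socle and ACC on right annihilators, and the standard QF criterion for such rings (e.g.\ from the Nicholson--Yousif monograph \cite{NY03}) completes the proof.

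The main obstacle is Step~1. The inclusion $J\subseteq Z_r$ is the only really nontrivial link in the chain: neither right mininjectivity nor essential socle alone suffices, and one must combine them in an annihilator calculation using the identity ${\bf rl}(k)=kR$ on minimal right ideals in order to certify that ${\bf r}(a)$ meets every nonzero right ideal of $R$.
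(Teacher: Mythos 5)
The paper itself offers no proof of this lemma: it is imported verbatim from \cite[Theorem 2.5]{SC06}, so the only in-paper ``proof'' is the citation. Your overall strategy --- force $R$ to be semiprimary and then invoke a known criterion for semiperfect (or perfect) two-sided mininjective rings --- is indeed the strategy of the cited proof, and your Step~1 is sound in outline: by \cite[Theorem 2.21]{NY03} right mininjectivity makes $Rk$ simple whenever $kR$ is, hence $JS_r=0$; with $S_r\subseteq_{ess}R_R$ this gives $J\subseteq Z_r$, and Mewborn--Winton finishes. (Your stated mechanism is slightly off: right mininjectivity gives ${\bf lr}(k)=Rk$, not ${\bf rl}(k)=kR$, and since $kR$ is simple, ``${\bf r}(a)$ meets $kR$'' can only mean $kR\subseteq{\bf r}(a)$, i.e.\ $ak=0$; the fact actually needed is $JS_r=0$.)

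The genuine gap is Step~2. The claim that ``ACC on right annihilators bounds the right Goldie dimension of $R$'' is false: the free algebra $k\langle x,y\rangle$ is a domain, so its only right annihilators are $0$ and $R$, yet its right Goldie dimension is infinite. Finiteness of $S_r$ has to come from mininjectivity instead: two-sided mininjectivity yields ${\bf rl}(T)=T$ for every finitely generated semisimple right ideal $T$ (the induction is the one in Proposition 2.6(3)), so an infinite independent family of simple right ideals would produce an infinite strictly ascending chain of right annihilators ${\bf rl}(k_1R),\ {\bf rl}(k_1R\oplus k_2R),\ \dots$, contradicting ACC. Even granting that $S_r$ has finite length on the right, your closing inference --- ``$S_r=S_l$ finitely generated plus $J$ nilpotent forces $R/J$ semisimple'' --- is unjustified as written. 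The usable route is that $S_r\subseteq_{ess}R_R$ gives $Z_r={\bf l}(S_r)$, hence $J=Z_r={\bf l}(S_l)$, and then $R/J$ embeds as a left module into a finite direct sum of copies of $S_l$; but this requires $S_l$ to be finitely generated \emph{as a left module}, and passing from finite right length of $S_r=S_l$ to finite left generation is exactly the asymmetric, delicate point (ACC on right annihilators is DCC on left annihilators, so the mirror-image chain argument is not available). Until that is repaired, semilocality is not established and the reduction in Step~3 does not go through.
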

\begin{thm}\label{thm 3.7}
The following are equivalent for a ring R.
\begin{enumerate}
\item R is QF.
\item R is a small dual and left perfect ring.
\item R is a small dual ring with ACC on right annihilators in which $S_{r}\subseteq_{ess}R_{R}$.
\item R is a small dual and semilocal ring with ACC on right annihilators.
\item R is a small dual and semilocal ring with ACC on essential right ideals.
\end{enumerate}
\end{thm}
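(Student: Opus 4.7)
The plan is to prove the cycle by establishing $(1)\Rightarrow(2),(3),(4),(5)$ at once (these are standard properties of QF rings) and then routing each of $(2),(3),(4),(5)$ back to $(1)$ through the two available external criteria: Lemma 3.5 (left perfect $+$ two-sided simple injective $\Rightarrow$ QF) and Lemma 3.6 (two-sided mininjective $+$ ACC on right annihilators $+$ $S_r\subseteq_{ess}R_R$ $\Rightarrow$ QF). For $(1)\Rightarrow$ the rest: a QF ring is two-sided artinian (so left perfect, semilocal, noetherian, and in particular satisfies ACC on right annihilators and on essential right ideals), two-sided self-injective (so $S_r\subseteq_{ess}R_R$), and dual (hence small dual).

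For $(2)\Rightarrow(1)$, I would use that left perfect implies semilocal, so the hypothesis of (2) fits into Theorem 3.1: $R$ becomes a (two-sided) dual ring, and then two-sided Kasch and two-sided simple injective. Plugging this into Lemma 3.5 delivers QF. For $(5)\Rightarrow(2)$, Proposition 3.2(3) gives that $J$ is nilpotent, and combining this with the semilocal hypothesis shows $R$ is semiprimary, in particular left perfect, so we land back in case (2). For $(3)\Rightarrow(1)$, Proposition 3.4(1) already supplies left and right mininjectivity, and the remaining hypotheses of (3) exactly match those of Lemma 3.6.

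The only implication that requires an extra observation, and therefore the main obstacle, is $(4)\Rightarrow(3)$: the missing ingredient is $S_r\subseteq_{ess}R_R$. The plan is to bridge this via ${\bf r}(J)$. By Proposition 3.4(2) applied to the small dual ring $R$, we have ${\bf r}(J)\subseteq_{ess}R_R$, so it suffices to show ${\bf r}(J)\subseteq S_r$ under the semilocal hypothesis. For any $a\in{\bf r}(J)$, the right ideal $aR$ satisfies $J\cdot aR=0$, so $aR$ is naturally a right module over $R/J$; since $R$ is semilocal, $R/J$ is semisimple artinian, whence $aR$ is a semisimple right $R$-module and hence lies in $S_r$. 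Essentiality of $S_r$ then transfers from that of ${\bf r}(J)$, giving (3), and Lemma 3.6 finishes. This closes the cycle $(1)\Rightarrow(2)\Rightarrow(1)$, $(1)\Rightarrow(5)\Rightarrow(2)\Rightarrow(1)$, and $(1)\Rightarrow(4)\Rightarrow(3)\Rightarrow(1)$.
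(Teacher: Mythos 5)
Your overall architecture agrees with the paper for most of the cycle: $(1)\Rightarrow$ the rest, $(2)\Rightarrow(1)$ via Theorem 3.1 and Lemma 3.5, $(3)\Rightarrow(1)$ via Proposition 3.4(1) and Lemma 3.6, and $(5)\Rightarrow(2)$ via Proposition 3.2(3) plus ``semilocal $+$ $J$ nilpotent $\Rightarrow$ semiprimary $\Rightarrow$ left perfect'' are all exactly the paper's steps. Where you diverge is $(4)$: the paper also sends $(4)$ to the left perfect case by applying Proposition 3.2 (in its left--right mirrored form, to the left small dual ring $R$ with ACC on right annihilators) to get $J$ nilpotent, whereas you try to deduce $S_r\subseteq_{ess}R_R$ directly and reduce $(4)$ to $(3)$.

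That step contains a genuine error. From $a\in{\bf r}(J)$ you get $J\cdot aR=0$, but this does \emph{not} make $aR$ a right $R/J$-module: the right module structure on $aR$ is given by right multiplication, and for it to factor through $R/J$ you would need $aR\cdot J=0$, which is not implied by $Ja=0$. The condition $J\cdot Ra=0$ makes $Ra$ a \emph{left} $R/J$-module, so what the semilocal hypothesis actually gives is ${\bf r}(J)\subseteq S_l$ (dually, ${\bf l}(J)\subseteq S_r$). The inclusion ${\bf r}(J)\subseteq S_r$ you assert is false for general semilocal rings: in the ring of upper triangular $2\times2$ matrices over a field, ${\bf r}(J)$ is the first row but $S_r$ is the second column. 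Your conclusion can nevertheless be rescued in the present setting, because $R$ is small dual, hence two-sided mininjective, hence $S_r=S_l$ by Proposition 3.4(1); combining ${\bf r}(J)\subseteq S_l=S_r$ with ${\bf r}(J)\subseteq_{ess}R_R$ from Proposition 3.4(2) does yield $S_r\subseteq_{ess}R_R$ and hence $(4)\Rightarrow(3)$. So your alternative route works once the left/right confusion is repaired and the equality $S_r=S_l$ is explicitly invoked; as written, the justification is wrong.
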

\begin{proof}
It is obvious that (1)$\Rightarrow$(2), (3), (4), (5).\\
\indent (2)$\Rightarrow$(1). If $R$ is a small dual and left perfect ring, by Theorem 3.1, $R$ is left and right simple injective. According to Lemma 3.5, $R$ is QF.\\
\indent (3)$\Rightarrow$(1). By Proposition 3.4, $R$ is left and right mininjective. Then by Lemma 3.6, $R$ is a QF ring.\\
\indent (4)$\Rightarrow$(1) and (5)$\Rightarrow$(1). If $R$ satisfies (4) or (5), by Proposition 3.2,  $J$ is nilpotent. Thus $R$ is a semiprimary ring which is left and right perfect.
According to (2)$\Rightarrow$(1), $R$ is a QF ring.
\end{proof}
\begin{rem}\label{rem 3.8}
 {\rm From the above theorem, we have\\
\indent (a). ``left perfect" in (2) of the above theorem can not be replaced by ``semiperfect". By \cite[Example 3]{NY97}, there is a commutative semiperfect, Kasch and simple injective ring $R$ which is not self-injective. Thus, by Theorem 3.1, $R$ is a small dual and semiperfect ring. But $R$ is not QF.\\
\indent (b). ``$S_{r}\subseteq_{ess}R_{R}$" and ``semilocal" in (3), (4), (5) of the above theorem can not be removed. Because $\mathbb{Z}$ is a small dual and noetherian ring which is not QF.\\
\indent (c). ``Small dual" in (3) of the above theorem can not be replaced by ``right small dual". By \cite[Example 8.16]{NY03}, there is a right Johns ring which is not right artinian. Recall that a ring $R$ is called \emph{right Johns} if $R$ is right dual and right noetherian. And by
\cite[Lemma 8.7 (4)]{NY03}, if $R$ is a right Johns ring then $S_{r}\subseteq_{ess}R_{R}$.\\
\indent (d). (5) gives partial affirmative answers to \cite[Question 2.20]{S12}.
}
\end{rem}

\end{document}